\newcommand{\vb}[1]{\mathbf{#1}}
\newcommand{\diag}{\mathrm{diag}}
\newcommand{\pair}[2]{\left\langle #1 , #2\right\rangle}
\DeclareMathOperator{\spn}{span}
\def\into{\mathrel{\hookrightarrow}}
\def\onto{\mathrel{\twoheadrightarrow}}
\newcommand{\sets}[2]{\left\{#1\,\middle|\,#2\right\}}
\newcommand{\genrel}[2]{\left\langle #1\,\middle|\,#2\right\rangle}
\newcommand{\stab}[2]{\mathrm{Stab}_{#1}(#2)}
\newcommand{\rquotient}{\backslash}
\DeclareMathOperator{\End}{End}
\newcommand{\id}{\mathrm{id}}
\newcommand{\R}{\mathbb{R}}
\newcommand{\N}{\mathbb{N}}
\newcommand{\Z}{\mathbb{Z}}
\newcommand{\C}{\mathbb{C}}
\newcommand{\F}{\mathbb{F}}
\newcommand{\Oo}{\mathcal{O}}
\newcommand{\GL}{\mathrm{GL}}
\newcommand{\SL}{\mathrm{SL}}
\newtheorem{theorem}{Theorem}
\newtheorem{cor}{Corollary}
\newtheorem{prop}{Proposition}
\newtheorem{lem}{Lemma}
\theoremstyle{definition}
\newtheorem{dfn}{Definition}
\theoremstyle{remark}
\newtheorem{ex}{Example}
\newcommand\Fl{{\mathcal F} l}
\theoremstyle{remark}
\newtheorem{rem}{Remark}
\theoremstyle{theorem}
\newtheorem{conjecture}{Conjecture}
\theoremstyle{definition}
\providecommand{\keywords}[1]{\textbf{\textit{Keywords---}} #1}
\title{On Lusztig's asymptotic Hecke algebra for $\SL_2$}
\date{\today}
\author{Stefan Dawydiak \thanks{Department of Mathematics, University of Toronto, Toronto, ON M5S 2E4
Canada; email \texttt{stefand@math.utoronto.ca}}}
\begin{document}
\maketitle
\begin{abstract}
Let $G$ be a split connected reductive algebraic group, let $H$ be the corresponding affine Hecke algebra, and let $J$ be the corresponding asymptotic Hecke algebra in the sense of Lusztig. When $G=\SL_2$, and the parameter $q$ is specialized to a prime power,
Braverman and Kazhdan showed recently that for generic values of $q$, $H$ has
codimension two as a subalgebra of $J$, and described a basis for
the quotient in spectral terms. In this note we write these functions
explicitly in terms of the basis $\{t_w\}$ of $J$, and further invert the
canonical isomorphism between the completions of $H$ and $J$, obtaining
explicit formulas for each basis element $t_w$ in terms of the basis
$\{T_w\}$ of $H$. We conjecture some properties of this expansion for
more general groups. We conclude by using our formulas to prove that $J$
acts on the Schwartz space of the basic affine space of $\SL_2$, and produce some
formulas for this action.
\end{abstract}
\keywords{Asymptotic Hecke algebra, Iwahori-Hecke algebra, basic affine space.}
\section{Introduction}
\subsection{The asymptotic Hecke algebra}
\label{section the asymptotic Hecke algebra}For $G$ a connected reductive algebraic group, a specialization of the affine Hecke algebra $H$ corresponding to the affine Weyl group $\tilde{W}$ of $G$ plays an important role in the representation theory of $G(F)$ for a $p$-adic field $F$. Explicitly, given a smooth representation $\pi$ of $G(F)$, a function $f\in H$ yields an endomorphism $\pi(f)$ of $\pi^I$, where $I$ is the Iwahori subgroup of $G$.

In \cite{affineII}, Lusztig defined the asymptotic Hecke
algebra $J$, which is a $\Z$-algebra with basis $\{t_z\}_{z\in\tilde{W}}$ equipped with an injection
$\phi\colon H\into J\otimes_\Z\mathcal{A}$ given by
\[
\phi\left(\sum_{x\in\tilde{W}}b_x C_x\right)=\sum_{\substack{x,z\in\tilde{W} \\ d\in\mathcal{D},~a(d)=a(z)}}b_xh_{x,d,z}t_z,
\]
where $\mathcal{D}$ is the set of distinguished involutions and $a$ is 
Lusztig's $a$-function; see \S \ref{section preliminaries} and Definition \ref{a function definition}. 
Multiplication (see Remark \ref{J ring structure remark}) in $J$, and the
definition of the map $\phi$ is given combinatorially in terms
of the structure constants for $H$ written in the $\{C_w\}$ basis. It was also shown in \cite{affineII} that $\phi$ is
an isomorphism after a certain completion, whose details we
recall in \S\ref{the map phi section}.

In \cite{BK}, the authors found an interpretation of $J$ as
certain $I\times I$-invariant functions on $G(F)$ and described the 
corresponding endomorphisms $\pi(f)$.

The purpose of this paper is to study the map $\phi$ in more detail (in the case of $\SL_2$) in order to obtain an explicit, as opposed to spectral, description of the elements of $J$ as functions on $G(F)$.
In what follows it will be convenient to twist $\phi$ by an involution $j$ of $H$ described in \S\ref{section preliminaries}. 
Then our first main result is as follows: we give a formula for $(\phi\circ j)^{-1}(t_w)$ for all $w$ by an explicit calculation in a self-contained way. The resulting formulas are given in Theorem \ref{phi inverse formula theorem} and Corollary \ref{tw Tw expansion corollary}.
As a byproduct we obtain the following result:
\begin{theorem}
\label{expansion}
\begin{enumerate}
\item
For any $w$ the element $(\phi\circ j)^{-1}(t_w)\in \mathcal{H}$ has the form
\[
\sum a_{w,x} C'_x
\]
where $a_{w,x}$ is a polynomial in $q^{-\frac{1}{2}}$. Moreover, $(-1)^{\ell(x)}a_{w,x}$ has nonpositive
integer coefficients.
\item
For any $w$ the element $(\phi\circ j)^{-1}(t_w)\in\mathcal{H}$ has the form
\[
\sum b_{w,x} T_x
\]
where $(q+1)b_{w,x}$ is a polynomial in $q^{-\frac{1}{2}}$. 
\end{enumerate}
\end{theorem}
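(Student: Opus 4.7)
The plan is to deduce both parts from the explicit formula for $(\phi\circ j)^{-1}(t_w)$ that appears later in Theorem~\ref{phi inverse formula theorem} and Corollary~\ref{tw Tw expansion corollary}: once those formulas are in hand, the two claims here follow by inspection, and the real content of the argument is the computation that produces them.

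First I would write down $\phi$ explicitly in type $\tilde{A}_1$. Lusztig's definition is in terms of the structure constants $h_{x,y,z}$ of $H$ in the $\{C_w\}$ basis, and for $\SL_2$ the Kazhdan--Lusztig polynomials $P_{y,w}$ are all equal to $1$, so that multiplication table is very simple: for a simple reflection $s$, the product $C_s C_w$ is either a sum of two $C$-basis elements or $(q^{1/2}+q^{-1/2})C_w$ depending on whether $sw>w$. Twisting by $j$ then gives the action of $\phi\circ j$ on the basis $\{C'_w\}$, and one sees that the composition is upper triangular with respect to the length filtration.

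Second I would invert $\phi\circ j$ on each $t_w$ by solving the resulting linear recursion in the completion $\mathcal{H}$. Because the recursion strictly decreases length, the corresponding infinite sum converges in the Krull topology and yields a formula
\[
(\phi\circ j)^{-1}(t_w) = \sum_{x} a_{w,x}\, C'_x
\]
with the $a_{w,x}$ given in closed form. Polynomiality of $a_{w,x}$ in $q^{-1/2}$ together with the sign pattern of the integer coefficients of $(-1)^{\ell(x)} a_{w,x}$ is then visible term by term, which gives part (1). For part (2) I would substitute $C'_x = q^{-\ell(x)/2}\sum_{y\le x} P_{y,x}(q)\, T_y$, which in $\tilde{A}_1$ is again simple since the $P_{y,x}$ are all $1$, and then interchange the two sums. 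Summing the resulting geometric tail in $q$ produces a single factor of $(1+q)^{-1}$ in each $T_x$-coefficient, which is precisely why $(q+1)b_{w,x}$ is a polynomial in $q^{-1/2}$ rather than a Laurent series.

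The step I expect to be the main obstacle is the convergence bookkeeping: one must track the Krull-topology order of each coefficient throughout the inversion, and then verify that no worse denominator than a single factor of $(q+1)$ appears once the expansion is rewritten in the $\{T_x\}$ basis. That the denominators collapse to exactly $(q+1)$, rather than to a higher power or to an unrelated polynomial, is the mechanism by which Theorem~\ref{expansion} drops out as a byproduct of the explicit formulas rather than requiring an independent argument.
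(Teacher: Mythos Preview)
Your proposal is correct and follows the paper's approach: Theorem~\ref{expansion} is indeed obtained as a byproduct of the explicit formulas in Theorem~\ref{phi inverse formula theorem} and Corollary~\ref{tw Tw expansion corollary}, exactly as you outline. The concrete form of the recursion you allude to is the paper's Lemma~\ref{phi inverse pair lemma}, which says that the tail sum $\sum_{w\text{ starts with }y} q^{\ell(w)/2} C_w$ maps under $\phi$ to $-q^{(\ell(y)-1)/2} t_y + q^{\ell(y)/2} t_{ys_i}$; combining $n$ such identities isolates $t_y$, and after applying $j$ the sign pattern for part~(1) and the single $(q+1)$ denominator for part~(2) are visible by inspection.
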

Let us remark that if we work with a finite Coxeter group instead of an affine one, then while the second assertion 
of Theorem \ref{expansion} remains true (in general $q+1$ must be replaced by the Poincar\'e polynomial of the 
corresponding flag variety), the first assertion is wrong in that case. In fact, it is clear that for finite 
Coxeter groups if some of the coefficients $b_{w,x}$ are genuine rational functions (i.e. not polynomials) then 
the same will also be true for some of the $a_{w,x}$.

We conjecture that similar statements hold more generally.
\begin{conjecture}
\label{function conjecture}
For any split connected reductive group $G$ and any $w\in \tilde{W}$,
we have 
\[
(\phi\circ j)^{-1}(t_w)=\sum a_{w,x} C'_x
\]
where $a_{w,x}$ is a polynomial in $q^{-\frac{1}{2}}$ such that $(-1)^{\ell(x)}a_{w,x}$ has nonpositive
coefficients.
Similarly, we conjecture that
\[
(\phi\circ j)^{-1}(t_w)=\sum b_{w,x} T_x
\]
where $(\sum_{w\in W} q^{\ell(w)})b_{w,x}$ is a polynomial in $q^{-1/2}$ (note that the sum in parentheses  is over the finite Weyl group).
\end{conjecture}
Conjecture \ref{function conjecture} (if true) is very interesting from a geometric point of view, and one can hope that the coefficients carry representation-theoretic information. More specifically, it would be 
extremely interesting to categorify $J$ with its basis $\{t_w\}$. By this we mean the following. Let ${\mathcal K}={\mathbb C}((z)), {\mathcal O}={\mathbb C}\llbracket z\rrbracket$. Consider the ind group-scheme $G(\mathcal K)$. Let $\Fl=G(\mathcal K)/I$ denote the affine flag variety. Then the Iwahori-Hecke 
algebra $H$ is the Grothendieck ring of the bounded derived category of mixed $I$-equivariant constructible 
sheaves on $\Fl$. Under this isomorphism the elements $C_x'$ correspond to the classes of irreducible perverse 
sheaves. The above conjecture suggests that the elements $t_w$ correspond to some canonical ind-objects in the 
above derived category. Moreover, these objects should have the property that every simple perverse 
sheaf appears there, shifted according to Lusztig's $a$ function (see Definition \ref{a function definition}). It would be extremely interesting to find a construction of these 
objects. 

The key simplification in type $\tilde{A}_1$ that allows the 
computations carried out in this note is the simple nature of the 
affine Weyl group and that the Kazhdan-Lusztig 
polynomials are all constant and equal to one, so that each $C'_w$
is a constant function. Geometrically, this corresponds to smoothness 
of $I$-orbit closures in $\Fl$. Exact 
formulas for the elements $t_w$ seem to be unlikely in higher rank, when these 
simplifications are not present.
\subsection{Further results}
In \S\ref{tw section} we show in an
elementary way that $J$ acts on $C_c^\infty (G/N)^I$, reproving in an elementary (in that we make make no serious use of the theory of harmonic analysis on $p$-adic groups, and use no algebraic geometry whatsoever) way
a result of \cite{BK}, and that $J$ lies
in the Harish-Chandra Schwartz space of $G$. These results are
recorded as Propositions \ref{t1 plane action trivial prop} and
\ref{ts0 plane action formula prop}, and Theorem \ref{J action 
theorem}. Let $\mathcal{S}_c=C^\infty_c(G/N)$ and let $\mathcal{S}$
be the Schwartz space of the basic affine space as in \cite{BKBasicAffine}.
In \cite{BK}, it is proved that the direct summand $J_0$ of $J$ 
corresponding to the big cell in $\tilde{W}$ is exactly the space 
of endomorphisms of $\mathcal{S}^I$ commuting with all Fourier transforms and all translations by cocharacters of a fixed maximal torus in $G$, and that $J_0\cdot\mathcal{S}_c^I=\mathcal{S}^I$.
In this way knowledge of $\mathcal{S}^I$ is equivalent to knowledge of $J_0$,
which in the case of $\SL_2$ is just $J_0=\spn{\{t_w\}}_{w\neq 1}$.
\subsection{Acknowledgements}
The author thanks Alexander Braverman for many helpful conversations and for introducing him to this material, 
and the Center for Advanced Studies at the Skolkovo Institute of Science and Technology for their 
hospitality during the period when this work was done. The author also thanks Kostya Tolmachov for helpful discussions.
\section{Formulas for the map $\phi$}
\subsection{Preliminaries}
\label{section preliminaries}
Throughout, $\pi$ is a uniformizer of a fixed non-archimedean local field
$F$ with ring of integers $\Oo$, and $q$ is the cardinality of the
residue field $\Oo/\pi\Oo$ (although until \S\ref{tw section} we can also view it as an indeterminate). We shall write $G=\SL_2$ as algebraic groups.
When there is no room for confusion, we write $G$ for $G(F)$ as well.
We fix the Borel subgroup $B$ of upper triangular matrices, and write $I\subset G(\Oo)$
for the corresponding Iwahori subgroup. Put $\tilde{W}$ for the affine Weyl group of $G$ ,with length function $\ell$ and set $S$ of simple reflections. Let $H$ be the Iwahori-Hecke algebra of $G$, over the ring $\mathcal{A}=\Z[q^{\frac{1}{2}},q^{-\frac{1}{2}}]$.  We recall that $H$ has a  basis $\{T_w\}_{w\in\tilde{W}}$, where multiplication is defined by
relations $T_wT_{w'}=T_{ww'}$ if $\ell(ww')=\ell(w)+\ell(w')$ and quadratic relation $(T_s+1)(T_s-q)=0$ for $s\in S$.
Additionally, we have the Kazhdan-Lusztig basis
\[
C_w=\sum_{y\leq w}(-1)^{\ell(w)-\ell(y)}q^{\frac{\ell(W)-\ell(y)}
{2}}P_{y,w}(q^{-1})q^{-\frac{\ell(y)}{2}} T_y
\]
and the basis $\{C'_w\}_{w\in\tilde{W}}$, which we recall is related to the $\{C_w\}_{w\in\tilde{W}}$ basis by $C'_w=(-1)^{\ell(w)}j(C_w)$. Here $j$ is the algebra
involution on $H$ defined in \cite{KL79} by $j(\sum
a_wT_w)=\sum \bar{a_w}(-1)^{\ell(w)}q^{-\ell(w)}T_w$, where $\bar{(\,)}\colon\mathcal{A}\to\mathcal{A}$ is the involution defined by $\overline{q^{\frac{1}{2}}}=q^{-\frac{1}{2}}$. The bar involution of $\mathcal{A}$ extends to the bar involution of $H$, and we have $\bar{C_w}=C_w$
and $\bar{C'_w}=C'_w$ for all $w$.
Several
definitions will be given in terms of the structure constants of $
H$ in the basis $\{C_w\}$, and we write $h_{x,y,z}$ to
mean those elements of $\mathcal{A}$ such that $C_xC_y=\sum_{z}
h_{x,y,z}C_z$.

Let $\alpha\colon\diag(a,a^{-1})\mapsto a^2$ be the positive root of $\SL_2$, and $\alpha^\vee$ the corresponding coroot. Write $X_*(A)$ for the cocharacter group of the maximal torus $A$ of diagonal matrices.
From now on, $\tilde{W}=W\ltimes X_*(A)=W\ltimes\Z\langle\alpha^\vee
\rangle$ is the affine Weyl group for $G=\SL_2$, with
fixed presentation $\tilde{W}=\genrel{s_0,s_1}{s_0^2=s_1^2=1}$.
We write $S=\{s_0,s_1\}$, with $s_1$ the
affine reflection, so that $W=\langle s_0\rangle$ is the finite Weyl
group. When working with this presentation, all the words we write down will be reduced.
The identification between this presentation and the semidirect
product realization of $\tilde{W}$ sends $s_0$ to the simple
reflection $s_\alpha$ corresponding to $\alpha$, and $s_1$ corresponds to $s_\alpha\pi$, where $
\pi=\pi^{\alpha^\vee}$. Our convention is that $\alpha$ is dominant, so that dominant coweights correspond to
positive integers, with $\pi^n=\pi^{n\alpha^\vee}=(s_0s_1)^n$ being
dominant,and $\pi^{-n}=(s_1s_0)^n$ being antidominant.
The distinguished involutions in $\tilde{W}$ are $
\mathcal{D}=\{1,s_0,s_1\}$. We remark that as an abstract group, $\tilde{W}$ is the infinite dihedral group, with $s_0$ and $s_1$
playing symmetric roles. However, as seen above, under the identification we have fixed, the finite and affine simple reflections play different roles. There is however an automorphism of $H$ 
exchanging $T_{s_0}$ and $T_{s_1}$, see \S\ref{subsection convolutions}.
In our special case, we have 
\[
C'_w=q^{-\frac{\ell(w)}{2}}\sum_{y\leq w}T_y,
\]
where $\leq$ is the strong Bruhat order \textit{i.e.} $y\leq w$ if and only if
after writing a reduced word for $w$ and deleting
some letters, we obtain a word for $y$.
\begin{ex}
We have $C'_e=1=T_e$ is the unit in $H$, where $e$ is the unit element in $\tilde{W}$, and
\[
C_{s_0s_1s_0}'=q^{-\frac{3}{2}}\left(T_{s_0s_1s_0}+T_{s_1s_0}
+T_{s_0s_1}+T_{s_0}+T_{s_1}+1\right).
\]
\end{ex}
\subsection{The map $\phi$}
\label{the map phi section}
\begin{prop}[\cite{affineII}, \S 2.4]
\label{phi definition}
The map $\phi\colon H\to J\otimes_\Z\mathcal{A}$ defined in \S \ref{section the asymptotic Hecke algebra} is a morphism of algebras.
\end{prop}
We now recall the details of the completion mentioned above. Let $\hat{\mathcal{A}}$ be the ring of formal Laurent
series in $q^{\frac{1}{2}}$, and let $\hat{\mathcal{A}}^+$ be the
ring of formal power series in $q^{\frac{1}{2}}$. We obtain a completion  $\mathcal{H}$ of $H$ whose elements are (possibly infinite) $\hat{\mathcal{A}}$-linear
combinations $\sum_{x}b_xC_x$ such that $b_x\to 0$ in the $(q)$-adic
topology on $\hat{\mathcal{A}}^+$ \textit{i.e.} such that for any $N>0$, $b_x\in (q^{\frac{1}{2}})^N\hat{\mathcal{A}}^+$ for $\ell(x)$ sufficiently large. When working with the basis $\{C'_w\}_{w\in\tilde{W}}$, we complete with respect to the negative powers of $q$. The involution $j$ naturally extends to a homeomorphism
between these different completions. In the same way, we obtain a completion $\mathcal{J}$ of $J\otimes_\Z\mathcal{A}$.
The definition of $\phi$ (see Proposition \ref{phi definition}) carries over verbatim, yielding an isomorphism $\phi\colon\mathcal{H}\overset{\sim}{\to}\mathcal{J}$. 

Over the course of the next three lemmas, we shall see that the definition of this
map simplifies considerably in our case. We first recall two special 
cases of results of Lusztig. We refer to the exposition in 
\cite{LusUnequal} for this material. There Lusztig writes $T_w$ for our $q^{-\frac{\ell(w)}{2}}T_w$, $c_w$ for our $C'_w$, and in our case $p_{y,w}=q^{\frac{-\ell(w)+\ell(y)}{2}}$.
We write $\mathcal{R}(w)=\sets{s\in S}{ws<w}$. If $w=rs_i$ is nontrivial, $\mathcal{R}(w)=\{s_i\}$ is a singleton.
\begin{lem}[\cite{LusUnequal}, Corollary 6.7]
\label{joined lemma 4.3}
Let $w\in\tilde{W}$ and $s=s_i$. Then
\[
C_wC_s=
\begin{cases}
-\left(q^{\frac{1}{2}}+q^{-\frac{1}{2}}\right)C_w&\text{if}~s\in\mathcal{R}(w)\\
\sum_{\substack{|\ell(w)-\ell(y)|=1 \\ ys<y}}C_y
&\textit{if}~s\not\in\mathcal{R}(w)
\end{cases}.
\]
\end{lem}
\begin{dfn}[Lusztig's $a$ function.]
\label{a function definition}
For $w\in\tilde{W}$, define $a(w)$ to be the smallest
integer such that $(-q)^{\frac{a(w)}{2}}h_{x,y,w}\in\mathcal{A}^+$ for all $x,y\in\tilde{W}$.
\end{dfn}
\begin{lem}[\cite{LusUnequal}, \S 13.4, Lemma 13.5, Proposition 13.7]
\label{a function lemma}
Let $w\in\tilde{W}$. If $w=1$, then $a(w)=0$. Otherwise
$a(w)=1$.
\end{lem}
Assembling Lemmas \ref{joined lemma 4.3} and \ref{a function lemma}, we can describe $\phi$
explicitly.
\begin{lem}
\label{phi lemma}
Let $i\neq j$ and $i,j\in\{0,1\}$. Then
\[
\phi(C_{s_i})=-\left(q^{\frac{1}{2}}+q^{-\frac{1}{2}}\right)t_{s_i}+t_{s_is_j}.
\]
More generally, if $\ell(w)\geq 2$ and $w=rs_i$, then
\[
\phi(C_w)=-\left(q^{\frac{1}{2}}+q^{-\frac{1}{2}}
\right)t_{rs_i}+t_r+t_{rs_is_j}.
\]
\end{lem}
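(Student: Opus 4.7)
The plan is to unwind Definition \ref{phi definition} and read off each coefficient $h_{w,d,z}$ directly from the product rule in Lemma \ref{joined lemma 4.3}. Since $\mathcal{D}=\{1,s_0,s_1\}$ and by Lemma \ref{a function lemma} the $a$-function takes only the values $0$ (on $1$) and $1$ (on all other elements), the pairing condition $a(d)=a(z)$ decouples: either $d=1=z$, or $d\in\{s_0,s_1\}$ and $z\neq 1$. Therefore
\[
\phi(C_w)=h_{w,1,1}\,t_1+\sum_{z\neq 1}\bigl(h_{w,s_0,z}+h_{w,s_1,z}\bigr)t_z,
\]
and the problem reduces to expanding $C_wC_1=C_w$, $C_wC_{s_0}$, and $C_wC_{s_1}$ in the $\{C_y\}$ basis.

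For the first claim ($w=s_i$) the computation is immediate: $C_wC_1=C_{s_i}$ contributes nothing to $t_1$; with $d=s_i\in\mathcal{R}(w)$, Lemma \ref{joined lemma 4.3} gives the diagonal term $-(q^{1/2}+q^{-1/2})t_{s_i}$; with $d=s_j\notin\mathcal{R}(w)$, the second case of Lemma \ref{joined lemma 4.3} demands $y$ joined to $s_i$ with $ys_j<y$, and the only such $y$ is $s_is_j$, contributing $t_{s_is_j}$.

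For the general case $w=rs_i$ with $\ell(w)\geq 2$, the argument is the same, but the sum for $C_wC_{s_j}$ now has two terms. The key combinatorial input is that in $\tilde{A}_1$ there are no braid relations, so each element has a unique reduced expression and all non-zero Kazhdan--Lusztig polynomials equal $1$; consequently the relation ``$y-w$'' simplifies to $|\ell(y)-\ell(w)|=1$ with $\tilde\mu(y,w)=1$. Thus I need to enumerate the elements of length $\ell(w)\pm 1$ whose right descent contains $s_j$. Since at each length $n$ there are exactly two elements of $\tilde{A}_1$, distinguished by their last letter, exactly one length-$(\ell(w)-1)$ element ends in $s_j$, namely $r=ws_i$, and exactly one length-$(\ell(w)+1)$ element ends in $s_j$, namely $ws_j=rs_is_j$. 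These give $C_wC_{s_j}=C_r+C_{rs_is_j}$, hence the claimed formula.

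The only delicate step is justifying the enumeration above, i.e.\ verifying that $r$ and $ws_j$ are indeed joined to $w$ and that no other elements are: this amounts to the (easy) observation that in $\tilde{A}_1$ consecutive-length elements are always comparable in Bruhat order, since the unique reduced expression of an element of length $n+1$ admits the unique reduced expressions of both length-$n$ elements as subwords. With that observation recorded, the remainder of the proof is bookkeeping on which reduced word ends in which simple reflection.
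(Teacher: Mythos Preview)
Your proof is correct and follows exactly the approach the paper intends: unwind the definition of $\phi$ using the values of the $a$-function from Lemma~\ref{a function lemma}, then read off the structure constants $h_{w,d,z}$ from Lemma~\ref{joined lemma 4.3}, noting that the condition $ys_j<y$ forces $y$ to end in $s_j$. The paper's own proof is the one-line version of this same argument; your added discussion of Bruhat comparability of consecutive-length elements is already absorbed into the statement of Lemma~\ref{joined lemma 4.3} as the paper formulates it, so it is harmless but not strictly needed.
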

\begin{proof}
We need only note that the condition $ys_j<y$ from Lemma
\ref{joined lemma 4.3} implies $y$ ends in $s_j$.
\end{proof}

Recall that the unit in $J$ is $1_J=t_{s_0}+t_{s_1}+t_1$, the sum of the basis elements corresponding to distinguished involutions. As $\phi$ preserves units, we have
$\phi(C_1)=t_1+t_{s_1}+t_{s_0}$.
\begin{dfn}
If $w$ and $y$ are elements in  $\tilde{W}$, we say that $w$
\emph{starts with} $y$ if we have reduced expressions $y=s_{i_1}\cdots s_{i_n}$ and
$w=s_{i_1}\cdots s_{i_n}s_{i_{n+1}}\cdots s_{i_{n+m}}$ for some $m
\geq 0$.
\end{dfn}
\begin{lem}
\label{phi of si lemma}
We have
\[
\phi\left(\sum_{\substack{w\in\tilde{W} \\ w~\text{\emph{starts with}}~s_0}}q^{\frac{\ell(w)}{2}}C_w\right)= -t_{s_0},
\]
and likewise
\[
\phi\left(\sum_{\substack{w\in\tilde{W} \\ w~\text{\emph{starts with}}~s_1}}q^{\frac{\ell(w)}{2}}C_w\right)= -t_{s_1}.
\]
\end{lem}
\begin{proof}

Under $\phi$, the infinite sum $\sum_{\substack{w\in\tilde{W} \\ w~\text{starts with}~s_0}}q^{\frac{\ell(w)}{2}}C_w$ is sent to
\begin{align}
&q^{\frac{1}{2}}\left(-\left(q^{\frac{1}{2}}+q^{-
\frac{1}{2}}\right)t_{s_0}+t_{s_0s_1}\right)
\label{first line s0 sum}
\\
&+
q\left(-\left(q^{\frac{1}{2}}+q^{-
\frac{1}{2}}\right)t_{s_0s_1}+t_{s_0}+t_{s_0s_1s_0}\right)
\\
&+
q^{\frac{3}{2}}\left(
-\left(q^{\frac{1}{2}}+q^{-
\frac{1}{2}}\right)t_{s_0s_1s_0}+t_{s_0s_1}+t_{s_0s_1s_0s_1}
\right)\label{last line s0 sum}
\\
&+\cdots
\nonumber.
\end{align}
By Lemma \ref{phi lemma} again, cancellation of terms
appearing in $\phi(C_w)$ with $\ell(w)=n$ can occur only against
terms appearing in $\phi(C_m)$ with
$|n-m|=1$, and we see that after cancellations between the terms
on lines \eqref{first line s0 sum} through \eqref{last line
s0 sum}, corresponding to lengths at most $3$, the sum stands as
\[
-t_{s_0}-q^2t_{s_0s_1s_0}+t_{s_0s_1s_0s_0s_1}+\text{terms from
longer words}.
\]
Further, if $r$ starts with $s_0$ and $w=rs_0$, the term $-q^{\frac{\ell(w)-1}{2}}q^\frac{1}{2}t_r$ from $
\phi(C_r)$ cancels with the term $q^{\frac{\ell(w)}{2}}t_r$ coming
from $\phi(C_w)$, and the term $q^{\frac{\ell(w)-1}{2}}t_w$ from $\phi(C_r)$ cancels with the term
$-q^{\frac{\ell(w)}{2}}q^{-\frac{1}{2}}t_w$ in $\phi(C_w)$. Likewise the terms $-q^{\frac{\ell(w)}
{2}}q^{\frac{1}{2}}t_w$ cancels with a term from $\phi(C_{ws_1})$ and $q^{\frac{\ell(w)}{2}}t_{ws_1}$
cancels with the term $-q^{\frac{\ell(w)+1}{2}}q^{-\frac{1}{2}}t_{ws_1}$ from $\phi(C_{ws_1})$. The
case for $w$ ending in $s_1$ is identical, and cancellations happen between terms from two words
ending both in $s_0$. The calculation for $t_{s_1}$ is identical.
\end{proof}
The formula for $\phi^{-1}$ is implicit in the proof Lemma \ref{phi
of si lemma}. Indeed, the lemma upgrades to
\begin{lem}
\label{phi inverse pair lemma}
Let $y=s_{i_1}\cdots s_{i_n}$, and let $i=i_n$. Then
\[
\phi\left(\sum_{\substack{w\in\tilde{W} \\ w~\text{\emph{starts with}}~y}}
q^{\frac{\ell(w)}{2}}C_w\right)= -q^{\frac{\ell(y)-1}{2}}t_y+
q^{\frac{\ell(y)}{2}}t_{ys_i}.
\]
\end{lem}
\begin{proof}
Direct calculation as in Lemma \ref{phi of si lemma}. Let $s_j$ be the
generator that is not $s_i$. Then the first terms are
\[
q^{\frac{\ell(y)}{2}}\left(-\left(q^{\frac{1}{2}}+q^{-\frac{1}{2}}
\right)t_{y}+t_ys_i+t_{ys_j}\right)
+
q^{\frac{\ell(y)+1}{2}}\left(-\left(q^{\frac{1}{2}}+q^{-\frac{1}{2}}
\right)t_{ys_j}+t_{y}+t_{ys_js_i}\right)+\cdots,
\]
and the cancellations in the proof of Lemma
\ref{phi of si lemma} pick up from this point, leaving only
$-q^{\frac{\ell(y)-1}{2}}t_y+q^{\frac{\ell(y)}{2}}t_{ys_i}$.
\end{proof}
We can therefore calculate $\phi^{-1}(t_{y})$ up to an error
term of length $\ell(ys_i)<\ell(y)$. Given that we can calculate
$\phi(t_{s_i})$, we can cancel the error terms inductively, yielding
a formula for $\phi^{-1}$.
\begin{theorem}
\label{phi inverse formula theorem}
Let $y=s_{i_1}s_{i_2}\cdots s_{i_n}$ so that $\ell(y)=n>0$,
and for $k\leq n$, write $y_k=s_{i_1}\cdots s_{i_k}$. Then
\[
-q^{\frac{n-1}{2}}\phi^{-1}(t_y)=\sum_{k=1}^{n}q^{n-k}
\sum_{\substack{w\in\tilde{W} \\ w~\text{\emph{starts with}}~y_k}}q^{\frac{\ell(w)}{2}}C_w
\]
%
\end{theorem}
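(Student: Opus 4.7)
The plan is to apply $\phi$ to the stated right-hand side and verify the image equals $-q^{(n-1)/2} t_y$, using Lemma \ref{phi inverse pair lemma} term by term. Writing $S_{y_k} = \sum_{w\text{ starts with }y_k} q^{\ell(w)/2} C_w$ and noting that $y_k s_{i_k} = y_{k-1}$ because $y$ is given as a reduced expression, the pair lemma yields
$$\phi(S_{y_k}) \;=\; -q^{(k-1)/2}\, t_{y_k} \;+\; q^{k/2}\, t_{y_{k-1}}$$
for $k \geq 2$, while Lemma \ref{phi of si lemma} gives $\phi(S_{y_1}) = -t_{y_1}$.

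After weighting by $q^{n-k}$ and summing over $k$, the coefficient of $t_{y_k}$ for $2 \leq k \leq n-1$ receives $-q^{n-(k+1)/2}$ from the $k$-th summand and $+q^{n-(k+1)/2}$ from the $(k+1)$-th summand, which cancel; the coefficient of $t_{y_1}$ similarly receives $-q^{n-1}$ from $k=1$ and $+q^{n-1}$ from $k=2$. The only surviving term is $-q^{(n-1)/2} t_{y_n} = -q^{(n-1)/2} t_y$, giving the formula. Equivalently, one can present this as a two-line induction on $n = \ell(y)$: the base case $n = 1$ is exactly Lemma \ref{phi of si lemma}, and the inductive step comes from rearranging the pair lemma for $y = y_n$ into
$$-q^{(n-1)/2}\, \phi^{-1}(t_y) \;=\; S_{y_n} \;-\; q^{n/2}\, \phi^{-1}(t_{y_{n-1}}),$$
after which the inductive hypothesis converts the second term into $\sum_{k=1}^{n-1} q^{n-k} S_{y_k}$.

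The one point requiring care is the boundary at $k=1$: read naively, the pair lemma would produce an extra $q^{1/2} t_{y_0} = q^{1/2} t_1$ summand in $\phi(S_{y_1})$, which after the outer weight $q^{n-1}$ would leave an uncancelled $q^{n-1/2} t_1$ in the image and spoil the identity. Lemma \ref{phi of si lemma} confirms that this term is in fact absent from $\phi(S_{y_1})$, so the telescope closes exactly. Beyond this bookkeeping the argument is routine $q$-power counting, and I anticipate no substantive difficulty.
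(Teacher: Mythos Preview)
Your proposal is correct and follows exactly the paper's approach: apply Lemma~\ref{phi inverse pair lemma} to the summands with $k\geq 2$ and Lemma~\ref{phi of si lemma} to the $k=1$ summand, then observe the telescoping. The paper's proof is the one-line instruction to do precisely this; you have simply spelled out the cancellation and noted the boundary issue at $k=1$ explicitly.
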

\begin{proof}
It suffices to prove that the images of the left-hand side and of the right-hand side under $\phi$ are equal. To do this, apply Lemma \ref{phi inverse pair lemma} to the last $\ell(y)-1$
summands and Lemma \ref{phi of si lemma} to the first.
\end{proof}
\begin{ex}
We calculate $\phi^{-1}(t_{s_0s_1s_0s_1})$, where $n=4$.
Under $\phi$,
\begin{align*}
&
q^2C_{s_0s_1s_0s_1}+q^{\frac{5}{2}}C_{s_0s_1s_0s_1s_0}+q^3C_{s_0s_1s_0s_1s_0s_1}+\cdots
\\
&+
q\left(
q^{\frac{3}{2}}C_{s_0s_1s_0}+
q^2C_{s_0s_1s_0s_1}+q^{\frac{5}{2}}C_{s_0s_1s_0s_1s_0}
+q^3C_{s_0s_1s_0s_1s_0s_1}+\cdots\right)
\\
&+
q^2\left(
q^C_{s_0s_1}+
q^{\frac{3}{2}}C_{s_0s_1s_0}+
q^2C_{s_0s_1s_0s_1}+q^{\frac{5}{2}}+C_{s_0s_1s_0s_1s_0}
+q^3C_{s_0s_1s_0s_1s_0s_1}+\cdots
\right)\\
&+
q^{3}
\sum_{\substack{w\in\tilde{W} \\ w~\text{starts with}~s_0}}q^{\frac{\ell(w)}{2}}C_w
\end{align*}
is sent to
\[
q^2t_{s_0s_1s_0}-q^{\frac{3}{2}}t_{s_0s_1s_0s_1}+q^{\frac{5}{2}}
t_{s_0s_1}-q^2t_{s_0s_1s_0}
+q^3t_{s_0}-q^\frac{5}{2}t_{s_0s_1}-q^3t_{s_0}=-q^{\frac{3}{2}}
t_{s_0s_1s_0s_1}.
\]
\end{ex}
\begin{cor}
\label{tw Tw expansion corollary}
If $y$ is as above, we have
\begin{multline*}
-q^{\frac{1-n}{2}}(\phi\circ j)^{-1}(t_y)=
\sum_{k=1}^{n}q^{k-n}\left(
\sum_{\substack{w\in\tilde{W} \\ w~\text{\emph{starts with}}~y_k}}
\frac{(-1)^{\ell(w)}q^{-\ell(w)+1}}{1+q}T_w+
\sum_{\substack{w\in\tilde{W} \\ w~\text{\emph{does not start with}}
~y_k \\ \ell(w)\geq k}}
\frac{(-1)^{\ell(w)+1}q^{-\ell(w)}}{1+q}T_w
\right.
\\
+
\left.
\frac{(-1)^{k}q^{-k+1}}{1+q}
\sum_{\substack{w\in\tilde{W} \\ w~\text{\emph{does not start with}}
~y_k \\ \ell(w)< k}}
T_w
\right)
.
\end{multline*}
\end{cor}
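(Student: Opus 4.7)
The plan is to start from Theorem~\ref{phi inverse formula theorem} and perform two passages: first apply the bar-involution $j$ to the $C$-basis formula, then rewrite each resulting $C'_w$ in the $T$-basis. Since $j$ satisfies $j(C_w)=(-1)^{\ell(w)}C'_w$ (immediate from $C'_w=(-1)^{\ell(w)}j(C_w)$) and is conjugate-linear with $j(q^{1/2})=q^{-1/2}$, applying $j$ to Theorem~\ref{phi inverse formula theorem} yields
\[
-q^{\frac{1-n}{2}}(\phi\circ j)^{-1}(t_y) = \sum_{k=1}^n q^{k-n} \sum_{\substack{w\in\tilde{W}\\ w\text{ starts with }y_k}} (-1)^{\ell(w)} q^{-\ell(w)/2}\, C'_w.
\]
I would then substitute $C'_w=q^{-\ell(w)/2}\sum_{v\leq w}T_v$ (recalled in \S2.1) and swap the order of summation so that, for each $v\in\tilde{W}$, the coefficient of $T_v$ becomes an inner sum over those $w$ that both start with $y_k$ and satisfy $v\leq w$.

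The key structural input is the Bruhat order on $\tilde{W}$. Since $\tilde{W}$ is the infinite dihedral group, every non-identity element has a unique (alternating) reduced expression, and a short subword argument shows that, for non-identity elements, $v\leq w$ if and only if $v=w$ or $\ell(v)<\ell(w)$. Combined with the fact that exactly one element of each length $\geq k$ starts with $y_k$, the inner sum reduces to a geometric series in $q^{-1}$:
\[
\sum_{\substack{w\text{ starts with }y_k\\ \ell(w)\geq m}}(-1)^{\ell(w)}q^{-\ell(w)} \;=\; \frac{(-1)^{\max(k,m)}\,q^{-\max(k,m)+1}}{1+q}.
\]

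To finish I would split the outer sum over $v$ into the three regions appearing in the corollary. If $\ell(v)<k$, then $v\leq w$ holds for every $w$ starting with $y_k$, so the cutoff is $m=k$ and the coefficient is $(-1)^{k}q^{-k+1}/(1+q)$; since such $v$ cannot start with $y_k$, they contribute to the third sum. If $v$ starts with $y_k$ (so $\ell(v)\geq k$), the Bruhat condition permits either $w=v$ or $\ell(w)>\ell(v)$; adding the $w=v$ term to the geometric tail with $m=\ell(v)+1$ gives $(-1)^{\ell(v)}q^{-\ell(v)+1}/(1+q)$, matching the first sum. If $\ell(v)\geq k$ but $v$ does not start with $y_k$, the case $w=v$ is excluded and the geometric tail alone contributes $(-1)^{\ell(v)+1}q^{-\ell(v)}/(1+q)$, matching the second sum.

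The main bookkeeping obstacle will be the boundary case $\ell(v)=k$: the element $v=y_k$ is handled by the ``starts with $y_k$'' formula, while any other length-$k$ element falls under the ``does not start with $y_k$'' formula. Once the geometric-series identity above is in hand both subcases follow from the same one-line calculation, and the three regional contributions assemble into the statement of the corollary.
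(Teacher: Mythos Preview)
Your proposal is correct and follows essentially the route the paper intends: the paper gives no explicit proof of the corollary but remarks immediately afterward that the factor $q(1+q)^{-1}$ appears as the geometric series $\sum_{n\geq 0}(-1)^n q^{-n}$, which is precisely the computation you carry out. Your three-region split of the Bruhat condition and the counting of elements starting with $y_k$ in the infinite dihedral group make explicit the bookkeeping the paper leaves to the reader.
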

The constant factor $q(1+q)^{-1}$ in each summand appears as $\sum_{n=0}
^{\infty}(-1)^nq^{-n}$. 

\subsection{The functions $f$ and $g$}
\label{section the functions f and g}
In \cite{BK}, Braverman and Kazhdan gave a spectral definition of two functions $f$ and $g$ on $G$, which viewed as elements in $J$ which span $J/H$ when $q$ is specialized to a prime power. 

They are
\[
f=
T_1+T_{s_0}+\sum_{n=1}^{\infty}q^{-2n}\left(
T_{(s_1s_0)^n}+T_{s_0(s_1s_0)^n}-q\left(
T_{(s_0s_1)^n}+T_{s_1(s_0s_1)^n}
\right)
\right)
\]
and
\[
g=\sum_{w\in\tilde{W}}(-1)^{\ell(w)}q^{-\ell(w)}T_w
\]
We find their images under $\phi$ and show they lie in $J$ by explicit calculation in Theorem \ref{images theorem}. 

By \cite{BK} equation 4.1, we have $J=\End(\mathrm{St}^I)\oplus J_0$, where $\mathrm{St}$ is 
the Steinberg representation of $\SL_2$, and $J_0$ is the algebra of endomorphisms of $C_c^\infty(F^2)^I$ that commute with translation and Fourier transform, see  \S\ref{Schwartz space subsection}. The function $g$ is the matrix coefficient of $\mathrm{St}^I$ and induces an integral operator spanning $\End(\mathrm{St}^I)$. The function $f$
does not have such a nice description, but the closely-related function $\tilde{f}$ (see equation \eqref{tilde f prepared form})
is defined to be constant on $I$-orbits on $G(\Oo)\rquotient G(F)$
by putting $\tilde{f}\restriction_X=(-q)^{-\dim X-1}$ for $I$-orbits $X$. We conjecture that $\tilde{f}$ thus defined lies in $J$ for any connected reductive group $G$.
\begin{rem}
The function $f$ is defined in \cite{BK} directly as a function on $
\SL_2(\Oo)\rquotient \SL_2(F)/I$. Our definition is equivalent, as can be seen by writing
\[
\SL_2(\Oo)\cdot\diag(t^n,t^{-n})\cdot I=I\cdot
\diag(\pi^n,\pi^{-n})\cdot I\coprod I\cdot
\begin{pmatrix}
0 & -1 \\
1 & 0
\end{pmatrix}
\diag(\pi^{-n},\pi^n)
\cdot I.
\]
\end{rem}
It is easy to rewrite elements given in the $T_w$ basis to elements given in the $C'_w$ basis; the
change of basis is ``upper-triangular with monomial entries." Precisely, we have the following
\begin{prop}
\label{change of basis prop}
We have
\[
T_w=\sum_{y\leq w}q^{\frac{\ell(y)}{2}}(-1)^{\ell(w)-\ell(y)}C_y'.
\]
\end{prop}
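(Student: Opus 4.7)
The plan is to prove this by straightforward triangular inversion of the formula $C'_w = q^{-\ell(w)/2}\sum_{y\leq w} T_y$ recalled earlier (which is valid here because every nonzero Kazhdan–Lusztig polynomial in type $\tilde{A}_1$ is identically $1$). I would proceed by induction on $\ell(w)$. The base case $w=1$ gives $T_1 = C'_1$, which matches. For the inductive step, rewrite the defining identity as
\[
T_w = q^{\ell(w)/2} C'_w - \sum_{y<w} T_y,
\]
substitute the inductive expression for each $T_y$, and collect coefficients of $C'_z$ for $z\leq w$.

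Equivalently, I would reduce the entire proposition to the Möbius-type identity
\[
\sum_{z\leq y\leq w}(-1)^{\ell(w)-\ell(y)} = \delta_{z,w},
\]
obtained by plugging the conjectural formula for $T_w$ into the formula for $C'_w$ and demanding that the double sum collapses to a single $T$-basis element. Once this identity is established, the proposition follows by inverting the triangular $T\leftrightarrow C'$ transition matrix.

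To verify the Möbius identity I would use the particularly simple Bruhat structure of the infinite dihedral group $\tilde{W}$: for $w$ with $\ell(w)=k\geq 1$, the interval $[1,w]$ consists of $w$ itself together with every element of length strictly less than $k$, and there are exactly two elements of each length $m$ with $1\leq m\leq k-1$ (and exactly one of length $0$). Consequently, for $z\neq w$ with $\ell(z)=j<k$, the interval $[z,w]$ contains the element $w$, the element $z$ (the unique element of length $j$ above itself), and both elements of every intermediate length $j<m\leq k-1$. The alternating sum then reduces to
\[
(-1)^{k-j} + 2\sum_{i=1}^{k-j-1}(-1)^{i} + 1,
\]
which a short parity check shows equals $0$ in both cases $k-j$ odd and $k-j$ even; the case $z=w$ contributes just the term $(-1)^0=1$.

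There is essentially no serious obstacle here: the proof is routine once one has the explicit description of Bruhat intervals in the infinite dihedral group, and the whole argument is just the Möbius inversion of a totally explicit triangular change of basis. The one point to be careful about is the boundary cases $\ell(z)=k-1$ (empty middle sum) and $w=1$ (empty interval below), both of which are easily handled separately.
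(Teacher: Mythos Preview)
Your proposal is correct. Both your approaches---the induction and the direct M\"obius inversion---work, and your verification of the alternating-sum identity over Bruhat intervals in the infinite dihedral group is accurate.

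The paper's proof is a slightly slicker variant of your inductive approach. Rather than substituting the inductive hypothesis for \emph{every} $T_y$ with $y<w$, the paper observes that for $w=s_irs_j$ the set $\{y\leq w\}$ decomposes as $\{w\}\cup\{rs_j\}\cup\{y\leq s_ir\}$, and the last piece repackages as $q^{\ell(s_ir)/2}C'_{s_ir}$. This yields the two-term relation
\[
q^{\ell(w)/2}C'_w-q^{\ell(s_ir)/2}C'_{s_ir}=T_w+T_{rs_j},
\]
so one only needs the inductive hypothesis for the single element $rs_j$. Your M\"obius-inversion route instead unpacks the full interval $[z,w]$ and counts; this is a little more bookkeeping but has the virtue of making the underlying combinatorial identity completely explicit. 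Either way the content is the same: the triviality of the Kazhdan--Lusztig polynomials in type $\tilde A_1$ forces the transition matrix to be the ``zeta'' matrix of the Bruhat order, and its inverse is the signed M\"obius matrix.
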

\begin{proof}
Clearly the proposition is true for $\ell(w)=0$, and for $
\ell(w)=1$. Now write $w=s_i rs_j$, so that
\[
C'_w=q^{-\frac{\ell(w)}{2}}\left(T_w+T_{rs_j}+T_{s_ir}+\cdots\right)=q^{-\frac{\ell(w)}{2}}\left(T_w+T_{rs_j}+q^{\frac{\ell(s_i r)}{2}}C'_{s_ir}\right)
\]
whence
\[
q^{\frac{\ell(w)}{2}}C'_w-q^{\frac{\ell(s_i r)}{2}}C'_{s_ir}=T_w+T_{rs_j}.
\]
The claim follows by induction on $\ell(w)$.
\end{proof}
We can now rewrite the functions $f$ and $g$ in the $C'_w$ basis, in preparation for applying $\phi\circ j$ to them. In the case of $g$, we have
\[
g=\sum_{w\in\tilde{W}}(-1)^{\ell(w)}q^{-\ell(w)}T_w=\sum_{w\in\tilde{W}}(-1)^{\ell(w)}q^{-\ell(w)}
\left(\sum_{y\leq w}q^{\frac{\ell(y)}{2}}(-1)^{\ell(w)-\ell(y)}C'_y\right),
\]
and we see that the coefficient $b_w$ of $C'_w$ is a power series in $q^{-\frac{1}{2}}$ of order
$q^{\frac{\ell(w)}{2}}$. Indeed, $C'_w$ will appear once in the expansion of $T_w$, and then twice
for each length greater than $\ell(w)$, and thus
\[
b_w=(-1)^{\ell(w)}q^{-\ell(w)}q^{\frac{\ell(w)}{2}}+
2\left(\sum_{n=\ell(w)+1}^{\infty}(-1)^n(-1)^{n-\ell(w)}q^{\frac{\ell(w)}{2}}q^{-n}\right).
\]
For $z\in\tilde{W}$ such that $\ell(z)=n\geq\ell(w)$, $(-1)^nq^{-n}$ is the coefficient of $T_z$ in
rewriting $g$, and $(-1)^{n-\ell(w)}q^{\frac{\ell(w)}{2}}$ is the coefficient of $C_w$ in the
expansion of $T_z$ according to Proposition \ref{change of basis prop}. Therefore
\[
b_w=(-1)^{\ell(w)}q^{-\frac{\ell(w)}{2}}\left(1+2\frac{q^{-1}}{1-q^{-1}}\right),
\]
and so
\begin{equation}
\label{g expression C'}
g=\left(1+2\frac{q^{-1}}{1-q^{-1}}\right)\sum_{w\in\tilde{W}}(-1)^{\ell(w)}q^{-\frac{\ell(w)}{2}}
C'_w.
\end{equation}
We note that $1+2\frac{q^{-1}}{1-q^{-1}}=1+2q^{-1}+2q^{-2}+\cdots=\sum_{w\in\tilde{W}}q^{-\ell(w)}$ is a unit in $\Z\llbracket q^{-
\frac{1}{2}}\rrbracket$.

Rewriting the function $f$ is simpler, in the sense that no
infinite series coefficients appear. In order to simplify the
eventual calculation, we will work with a related function
\begin{equation}
\label{tilde f definition}
\tilde{f}=f-T_1-T_{s_0}=\sum_{m=1}^{\infty}q^{-2m}\left(
\underbrace{T_{s_0(s_1s_0)^m}}_{A}+\underbrace{T_{(s_1s_0)^m}}_{B}-
q\left(\underbrace{T_{(s_0s_1)^m}}_{C}+\underbrace{T_{s_1(s_0s_1)^m}}_{D}\right)
\right).
\end{equation}
The first thing is again to calculate the coefficients $b_w$ such
that $\tilde{f}=\sum_{w\in\tilde{W}}b_wC'_w$. For coefficients
$b_{s_0s_1}$, we see that instances of $C'_w$ are contributed by the $C$- and $D$-type terms starting from
$m=n$, and that, for length reasons, almost all the contributions cancel, leaving just $-qq^{-n}$. The
type $A$ terms contribute starting from $m=n$, and the type $B$ terms, from $m=n+1$. For the same reason,
only the first instance of $C'_{(s_0s_1)^n}$ coming from $T_{(s_0s_1)^n}$ fails to cancel, so that
$b_{(s_0s_1)^n}=q^n(-1-q)$.

No terms $C'_{(s_1s_0)^n}$ appear. Indeed, $A$- and $B$-type terms both begin contributing at $m=n$, but
have contributions with opposite signs. The same goes for $C$- and $D$-type terms, which both start contributing
from $m=n+1$. For exactly the same reasons (except the $A$ and $B$-type terms start to contribute at $m=n+1$
as well), no terms $C'_{s_1(s_0s_n)^n}$ appear.

For $b_{s_0(s_1s_0)^n}$, the $A$-type terms contribute from $m=n$ onwards, and the $B$-type terms, from $m=n+1$.
All contributions except the first cancel, leaving $q^{-n+\frac{1}{2}}$. The type $C$ and $D$ terms contribute
from $m=n+1$ and $=n+2$, respectively, with opposite signs as usual. Their contribution simplifies to
$qq^{-n-\frac{3}{2}}$, making $b_{s_0(s_1s_0)^n}=q^{-n}(q^{\frac{1}{2}}+q^{-\frac{1}{2}})$.

Therefore
\begin{equation}
\label{tilde f prepared form}
\tilde{f}=\sum_{n=1}^{\infty}q^{-n}(-1-q)C'_{(s_0s_1)^n}+q^{-n}\left(q^{\frac{1}{2}}+q^{-\frac{1}{2}}\right)C'_{s_0(s_1s_0)^n}.
\end{equation}

Recall from \S\ref{section the functions f and g} the functions $f$ and $g$ defined in \cite{BK} that form a basis of $J/H$.
\begin{theorem}
\label{images theorem}
We have
\begin{enumerate}
\item
$\phi(j(g))=\left(1+2\frac{q}{1-q}\right)t_1$;
\item
$\phi(j(\tilde{f}))=\left(q^{\frac{1}{2}}+q^{-\frac{1}{2}}\right)t_{s_0s_1}-(q+1)t_{s_0}$.
\end{enumerate}
\end{theorem}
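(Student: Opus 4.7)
The plan is to apply $\phi$ to $j(g)$ and $j(\tilde f)$ starting from the $C'_w$-expansions already derived in equations \eqref{g expression C'} and \eqref{tilde f prepared form}, then feed the result into Lemma \ref{phi lemma} (and Lemma \ref{phi of si lemma} for part~1) and watch cancellations. The key preliminary observation is that since $C'_w = (-1)^{\ell(w)} j(C_w)$ and $j$ is an involution, one has $j(C'_w) = (-1)^{\ell(w)} C_w$; moreover $j$ is bar-linear with respect to $q^{1/2} \mapsto q^{-1/2}$, so applying $j$ to an expansion $\sum_w c_w C'_w$ produces $\sum_w \bar c_w (-1)^{\ell(w)} C_w$.

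For part (1), starting from \eqref{g expression C'} the sign $(-1)^{\ell(w)}$ from the coefficient cancels the sign $(-1)^{\ell(w)}$ coming from $j(C'_w)$, and the bar flips $q \mapsto q^{-1}$ in the scalar prefactor, giving
\[
j(g) = \left(1 + 2\frac{q}{1-q}\right) \sum_{w \in \tilde W} q^{\ell(w)/2} C_w.
\]
Split the sum into the three pieces $w = 1$, $w$ starts with $s_0$, and $w$ starts with $s_1$. Applying $\phi$ termwise and using $\phi(C_1) = t_1 + t_{s_0} + t_{s_1}$ together with both halves of Lemma \ref{phi of si lemma}, the contributions $t_{s_0}$ and $t_{s_1}$ cancel against the outputs of the two infinite sums, leaving $(1 + 2\frac{q}{1-q}) t_1$.

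For part (2), the same recipe applied to \eqref{tilde f prepared form}, using $\ell((s_0 s_1)^n) = 2n$ and $\ell(s_0(s_1 s_0)^n) = 2n+1$, yields
\[
j(\tilde f) = \sum_{n=1}^{\infty} \Bigl( -q^{n-1}(q+1)\, C_{(s_0 s_1)^n} \;-\; q^n(q^{1/2} + q^{-1/2})\, C_{s_0(s_1 s_0)^n}\Bigr).
\]
By Lemma \ref{phi lemma} each $\phi(C_{(s_0 s_1)^n})$ and $\phi(C_{s_0(s_1 s_0)^n})$ splits as a ``diagonal'' term $-(q^{1/2}+q^{-1/2}) t_w$ plus two neighbours $t_r$ and $t_{ws_j}$ of adjacent length. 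Collecting coefficients of each $t_w$, one expects telescoping: the coefficient of $t_{(s_0 s_1)^n}$ for $n \geq 2$ and of $t_{s_0(s_1 s_0)^n}$ for all $n \geq 1$ should vanish, and the only survivors should be the $n = 1$ contribution $-(q+1)\, t_{s_0(s_1 s_0)^0} = -(q+1) t_{s_0}$ from the first family, together with the $n=1$ contribution $\bigl[(q+1) - q\bigr](q^{1/2}+q^{-1/2})\, t_{s_0 s_1} = (q^{1/2}+q^{-1/2})\, t_{s_0 s_1}$ from the collision of the $t_{(s_0s_1)^n}$ terms.

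The main obstacle is purely bookkeeping: verifying that the three sources contributing to each interior $t_w$ really sum to zero. The most delicate case is the coefficient of $t_{s_0(s_1 s_0)^n}$ for $n \geq 1$, where the square $(q^{1/2}+q^{-1/2})^2 = q + 2 + q^{-1}$ produced by the ``diagonal'' term of $\phi(C_{s_0(s_1s_0)^n})$ must cancel against the sum $q^{n-1}(q+1) + q^n(q+1)$ contributed by the two neighbouring indices of the first family; this is a short algebraic identity but the one place where the infinite series must align exactly. Once this telescoping is in hand, the claimed formulas fall out immediately.
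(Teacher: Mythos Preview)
Your argument is correct. Part~(1) is handled exactly as in the paper: after computing $j(g)$ you split the sum over $\tilde W$ into the identity term and the two half-sums starting with $s_0$ or $s_1$, then combine $\phi(C_1)=t_1+t_{s_0}+t_{s_1}$ with Lemma~\ref{phi of si lemma}.

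For part~(2) your route diverges from the paper's. Observe that your expression for $j(\tilde f)$, once regrouped, is precisely
\[
j(\tilde f)=-(1+q^{-1})\sum_{\substack{w\in\tilde W\\ w\ \text{starts with}\ s_0s_1}} q^{\ell(w)/2}C_w,
\]
which is exactly the input to Lemma~\ref{phi inverse pair lemma} with $y=s_0s_1$. The paper simply invokes that lemma (forward-referenced) to read off $-(1+q^{-1})\bigl(-q^{1/2}t_{s_0s_1}+q\,t_{s_0}\bigr)=(q^{1/2}+q^{-1/2})t_{s_0s_1}-(q+1)t_{s_0}$. You instead carry out the telescoping by hand via Lemma~\ref{phi lemma}; your key identity check is right, since the diagonal contribution to the coefficient of $t_{s_0(s_1s_0)^n}$ is $q^n(q^{1/2}+q^{-1/2})^2=q^{n+1}+2q^n+q^{n-1}$, and the two neighbour contributions from the even-length family sum to $q^{n-1}(q+1)+q^n(q+1)=q^{n+1}+2q^n+q^{n-1}$, which cancel. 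So your computation is really a direct proof of the special case $y=s_0s_1$ of Lemma~\ref{phi inverse pair lemma}; the paper's route is more economical once that lemma is available, while yours is self-contained and avoids the forward reference. (Incidentally, your coefficients for $j(\tilde f)$ are correct; the displayed prefactor $(1-q^{-1})$ in the paper's proof should read $-(1+q^{-1})$.)
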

\begin{proof}
Applying $j$ to equation \eqref{g expression C'}, we get $j(g)=
\left(1+2\frac{q}{1-q}\right)\sum_{w\in\tilde{W}}q^{\frac{\ell(w)}
{2}}C_w$. We conclude by adding the results of Lemma \ref{phi of si
lemma} together and recalling that $\phi$ preserves units.

Applying $j$ to expression \eqref{tilde f prepared form}, we obtain
\[
j(\tilde{f})=(1-q^{-1})\sum_{n=1}^{\infty}q^nC_{(s_0s_1)^n}+q^{n+\frac{1}{2}}C_{s_0(s_1s_0)^n},
\]
to which we apply Lemma \ref{phi inverse pair lemma}.
\end{proof}	
%

%
%
%
\section{The elements $t_w$ as functions on $G$}
\label{tw section}

\subsection{The Harish-Chandra Schwartz space}
\label{Schwartz space subsection}
From now on, we write $t_w$ for $(\phi\circ j)^{-1}(t_w)$ and we view $q$ as the cardinality of the residue field of $F$.

Recall that we can interpret $H$ as the convolution algebra
$C_c^\infty(I\rquotient G/I)$. Using Corollary \ref{tw Tw expansion
corollary}, we can see in an elementary way that the functions $t_y$
lie in the Harish-Chandra Schwartz space $\mathcal{C}(G)$, whose
definition we now recall.

Write $G=KAK$ where $K=\SL_2(\Oo)$
and $A$ is the maximal torus of diagonal matrices. We can write
any $g\in G$ as $g=k_1\pi^{\lambda(g)} k_2$, where $k_1,k_2\in K$
and $\lambda(g)$ is a dominant coweight depending on $g$
\textit{i.e.} in our case identifiable with a nonnegative integer.
Define $\Delta(g)=q^{\pair{\lambda}{\rho}}$, where $\rho$ is
the half-sum of positive roots. The \emph{Harish-Chandra Schwartz space}
is then the space of functions $f\colon G\to \C$ such that
$f$ is bi-invariant with respect to some open compact subgroup,
and such that for all polynomial functions $p\colon G\to F$
and $m>0$, we have
\begin{equation}
\label{Schwartz definition inequality}
\Delta(g)|f(g)|\leq \frac{C}{(\log(1+|p(g)|))^m}
\end{equation}
for some constant $C$ depending on $m$ and $p$.
\begin{prop}
The functions defined in Corollary \ref{tw Tw expansion corollary}
all lie in $\mathcal{C}(G)$.
\end{prop}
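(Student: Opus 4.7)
The plan is to use the explicit expansion of $t_y$ in the basis $\{T_w\}$ provided by Corollary \ref{tw Tw expansion corollary} and verify the conditions defining $\mathcal{C}(G)$. Since each $T_w$ is (in the standard normalization making the defining relation $(T_s+1)(T_s-q)=0$ hold) the characteristic function of the Iwahori double coset $IwI$, the formal series $t_y = \sum_{w\in\tilde{W}} b_{w,y} T_w$ is automatically bi-$I$-invariant, and its evaluation at any fixed $g$ is finite: exactly one $T_w$ is nonzero at $g$, so $t_y(g) = b_{w(g),y}$, where $w(g)$ is the unique element of $\tilde{W}$ with $g \in I w(g) I$. Thus the Schwartz estimate reduces to a uniform bound on $|b_{w(g),y}|$ in terms of the geometric size of $g$.

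The main step is then a two-sided asymptotic estimate comparing the decay of $b_{w,y}$ with the growth of $\Delta$ and of polynomial functions on $G$. On the coefficient side, inspection of Corollary \ref{tw Tw expansion corollary} shows that for $y$ of length $n$, the contribution to $b_{w,y}$ from each $k$ is $(\pm) q^{k-n}\cdot q^{-\ell(w)+O(1)}/(1+q)$, so summing over at most $n$ values of $k$ yields a uniform bound $|b_{w,y}| \leq C_y q^{-\ell(w)}$ once $\ell(w)\geq n$ (the finitely many shorter $w$ are harmless). On the geometric side, I would establish the relation that for $g \in IwI$ the dominant coweight $\lambda(g)$ appearing in $g = k_1 \pi^{\lambda(g)} k_2$ satisfies $\langle \lambda(g), \rho\rangle = \ell(w)/2 + O(1)$, whence $\Delta(g) \leq C q^{\ell(w)/2}$. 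This follows from the decomposition of $K \pi^m K$ into Iwahori double cosets recalled earlier in the paper, together with the observation that $\pi^m = (s_0 s_1)^m$ and $\pi^{-m} = (s_1 s_0)^m$ each have $\tilde{W}$-length $2m$; one verifies the four parity classes $(s_0s_1)^m$, $s_1(s_0s_1)^m$, $(s_1s_0)^m$, $s_0(s_1s_0)^m$ directly. Finally, for any polynomial function $p$ on $\SL_2$, $|p(g)|$ is bounded by a polynomial in the $F$-absolute values of the matrix entries, each of which is at most $q^{\langle\lambda(g),\rho\rangle} \leq q^{\ell(w(g))/2 + O(1)}$.

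Combining these estimates gives $\Delta(g)|t_y(g)| \leq C' q^{-\ell(w(g))/2}$, while $\log(1+|p(g)|) \leq C''_p (1 + \ell(w(g)))$. For any $m$, the elementary bound $q^{-N/2}(1+N)^m \to 0$ as $N\to\infty$ implies that \eqref{Schwartz definition inequality} holds with a constant depending only on $y$, $m$, and $p$. The only genuine obstacle is the uniform bookkeeping in the geometric step: one must check that the $O(1)$ error in $\langle \lambda(g), \rho\rangle = \ell(w)/2 + O(1)$ is bounded across all four parity classes, and that the coefficient of $T_w$ in $t_y$ really is bounded by $q^{-\ell(w)}$ and not merely by some slowly-decaying expression once one accounts for all three sums in Corollary \ref{tw Tw expansion corollary}. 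Since $G = \SL_2$ is of rank one, both are direct case inspections and no conceptual difficulty arises.
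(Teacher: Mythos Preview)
Your proposal is correct and follows essentially the same argument as the paper: both reduce to bi-$I$-invariance, read off from Corollary~\ref{tw Tw expansion corollary} that the coefficient of $T_w$ in $t_y$ is $O_y(q^{-\ell(w)})$, relate $\Delta(g)=q^{\langle\lambda(g),\rho\rangle}$ to $q^{\ell(w(g))/2}$ via the Iwahori refinement of the Cartan decomposition, bound $\log(1+|p(g)|)$ linearly in $\ell(w(g))$ using that the matrix entries of $g$ have absolute value at most $q^{\langle\lambda(g),\rho\rangle}$, and conclude by the elementary fact that $q^{-N/2}N^m$ is bounded. The paper parametrizes by $n=\lambda(g)$ rather than by $\ell(w(g))$, but the relation $\ell(w)=2n+O(1)$ that you isolate is exactly what makes the two presentations equivalent; your hedged ``genuine obstacles'' at the end are indeed just the direct rank-one case checks you describe, and present no difficulty.
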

\begin{proof}
Clearly the $t_y$ are all bi-invariant with respect to the Iwahori
subgroup, which is open and closed in the compact subgroup $K$, as
it is the preimage of the discrete group $B(\F_q)$, hence is open
compact. Fix $y$ and let $f=t_y$.

Let $g\in K\pi^{\lambda}K= I\pi^\lambda I\sqcup Is_0\pi^{\lambda} I
\sqcup I\pi^\lambda s_0I\sqcup I\pi^{-\lambda} I$ for $\lambda=\lambda(g)=n>0$. Thus
$g$ lies in an Iwahori double coset corresponding to an element of $
\tilde{W}$ of length $2n\pm 1$. Here $\pi^\lambda$
is $(s_0s_1)^n$.
In our case, $\Delta(g)=q^{\lambda(g)}$, and so by Corollary
\ref{tw Tw expansion corollary}, up to a multiplicative
scalar depending on $f$ we have $\Delta(g)|f(g)|\leq q^{-n+ 2}$
if $\lambda$ is identified with $n$.
We must therefore bound $q^{2-n}(\log(1+|p(g)|))^{m}$ uniformly in $n$.
If $\lambda(g)=0$, then $\Delta(g)|f(g)|\leq q^2$ up to the same scalar.
Let $p$ and $m$ be given. Then
\[
p(g)=p(k_1ak_2)=\sum_{i=-N_1}^{N_2}(\pi^\lambda)^ip_i(k_1,k_2)
\] 
where the $p_i$ are polynomials in the eight entries of $k_1$ and
$k_2$, and $N_1,N_2\in \N$. Therefore
\[
|p(g)|\leq\max_i\left|(\pi^\lambda)^ip_i(k_1,k_2)\right|
\leq\max_i|\pi^{ni}|C_p\leq q^{nM_p}C_p
\]
for $C_p>0$ and $M_p\in \N$ depending on $p$. Then
\begin{align*}
\log(1+|p(g)|)&\leq\log(q^{nM_p}+q^{nM_p}C_p)
\\
&=
\log(q^{nM_p}(1+C_P))
\\
&
=nM_p\log(q(1+C_p)^{1/nM_p})
\\
&
\leq
nM_p\log(q(1+C_p))
\\
&
=nM_pD_p
\end{align*}
with $D_p>0$. Therefore $M_p^mD_p^m(\log (1+|p(g)|)^{-m}\geq n^{-m}$.
By elementary calculus, there is $F_m>0$ such that $n^m\leq F_m q^n$
for all $n\in \N$. It follows that
\[
\frac{1}{q^{n+2}}\leq\frac{1}{q^{n-1}}\leq \frac{q^2F_mM_p^mD_p^m}{(\log(1+|p(g)|))^m}
\]
as required.
\end{proof}
\subsection{Action on functions on the plane}
\subsubsection{The plane}
\label{the plane subsection}
Let $N=N(F)$ be the subgroup of upper triangular matrices with $1$s on
the diagonal, and recall that $G/N=F^2\setminus\{0\}$. Recalling
the Iwasawa decomposition $G=KAN$, where $K=\SL_2(\Oo)$ and $A$ is the maximal torus of diagonal matrices, we see that $K$-orbits in
$F^2\setminus\{0\}$ are labelled by $\Z=X_*(A)$, and are of the form
\[
K\pi^n\begin{pmatrix}
1 \\
0
\end{pmatrix}=\begin{pmatrix}
\pi^n e \\
\pi^n g
\end{pmatrix} .
\]
if elements of $K$ are written $k=\begin{pmatrix}
e & f \\
g & h
\end{pmatrix} $. Note that we cannot have both $e$ and $g$ divisible
by $\pi$, and therefore $K$-orbits are precisely of the form $\pi^n
\Oo^2\setminus\pi^{n+1}\Oo^2$. Indeed, $e$ and $g$ are 
not both in $\pi\Oo$, so one is a unit. If $e$ is a unit, then we may chose
$k=\begin{pmatrix}
e & 0 \\
g & e^{-1}
\end{pmatrix} $. If $g$ is a unit, we may chose
$k=\begin{pmatrix}
e & -g^{-1} \\
g & 0
\end{pmatrix} $.

Each $K$-orbit decomposes into two $I$-orbits. The two cases
that partition the points $k\pi^n(1,0)^T$  are $k
\in I$ and $k\not\in I$. If $k\in I$, then the $I$-orbit consists of
points of the form
\[
\begin{pmatrix}
\pi^n e \\
\pi^{n+1}g
\end{pmatrix}\in \begin{pmatrix}
\pi^n\Oo^\times \\
\pi^{n+1}\Oo
\end{pmatrix}\subset\pi^n\Oo^2\setminus\pi^{n+1}\Oo^2.
\]
We denote the characteristic functions of such orbits by $\psi_n$.
The remaining orbit consists of points of the form
\[
\begin{pmatrix}
\pi^ne \\
\pi^{n+1}g
\end{pmatrix}\in \begin{pmatrix}
\pi^n\Oo \\
\pi^{n}\Oo^\times
\end{pmatrix}\subset\pi^n\Oo^2\setminus\pi^{n+1}\Oo^2.
\]
We denote the characteristic functions of such orbits by $\varphi_n$.
The characteristic functions of the closures of these orbits are
\[
\bar{\varphi}_n:=\sum_{k=n}^{\infty}\varphi_k+\psi_k
\]
and
\[
\bar{\psi}_n:=\sum_{k=n}^{\infty}\psi_k+\varphi_{k+1}.
\]
The Iwahori subgroup acts on functions on $G/N$ by translation as $
(g\cdot f)(x)=f(g^{-1}x)$, and the functions $\bar{\varphi}_n$
and $\bar{\psi}_n$ give a basis for $C^\infty_c(F^2)^I$. Note that we have, for example,
$\varphi_0=\bar{\varphi}_0-\bar{\psi}_0$. The functions
$\bar{\varphi}_n$ give a basis for $C^\infty_c(F^2)^K$.

Recall also that $I\rquotient G/NA(\Oo)\simeq \tilde{W}$, hence $I$-invariant
functions (which are automatically $A(\Oo)$-invariant) on $F^2\setminus\{0\}$ are the same as functions on the
set of alcoves; in our case, intervals in $\R$ with integer endpoints.
A basis for $C_c^\infty(F^2)^I$ is then given under
this identification by half lines with integer boundary points,
corresponding to semi-infinite orbit closures.
For the general construction with a different normalization, see \cite{BKBasicAffine}.
We now
fix some relevant notation and identifications for alcoves.
We identify the alcove corresponding to $\varphi_0$ with the
interval $[-1,0]$ and the alcove corresponding $\psi_0$ with the
interval $[0,1]$, so that \textit{e.g.} $\varphi_2$ corresponds to
$[3,4]$. 
\subsubsection{Convolutions}
\label{subsection convolutions}
We can now describe how the affine Hecke algebra
acts on functions on the plane. The content of the following lemmas is
well known; for a general combinatorial description of them with
different normalizations, see \cite{Wgraph}. It will be useful to
observe that the convolution action commutes with the right action
of $2\Z$ on the set of alcoves, and that the functions $\varphi_n,\psi_n$ are periodic in the sense that $(m\alpha^\vee)\cdot\varphi_n=\varphi_{n+m}$ and likewise for $\psi_n$.

We view the convolution action as follows: given $T_w$ and the
characteristic function $\chi_X$ of an $I$-orbit $X$, we have a
multiplication map
\[
IwI\times X\to G/N,
\]
which descends to the quotient of the left-hand side by the equivalence
relation $(g,x)\sim (gi,i^{-1}x)$ for $i\in I$, yielding a map
\[
IwI\underset{I}{\times}X\to G/N.
\]
The image of this map is finitely-many $I$-orbits, and
the coefficient of the characteristic function of each orbit
is the number of points in the fibre over any point in that orbit.

It will be useful to note that $T_{s_0}$ and $T_{s_1}$ are related
by the following automorphism $\Phi$ of $G$. Let $\Theta$ be the automorphism
given by inverse-transpose, $\Psi$ be conjugation
by $\diag(1,\pi)\in\GL_2(F)$, and then $\Phi=\Psi\circ\Theta$.
Observe that $\Phi$ preserves $I$, and therefore induces an automorphism
of $H$, which exchanges $T_{s_0}$ and $T_{s_1}$. In particular,
$T_{s_1}$ can be realized as the characteristic function of $K'\setminus
I$, where $K'$ is the maximal compact subgroup
\[
\sets{\begin{pmatrix}
a & b \\
c & d
\end{pmatrix} }{a,d\in\Oo,~c\in\pi\Oo,~b\in\pi^{-1}\Oo}.
\]
The complement of $I$ is then the subset of such matrices with
$b\in \pi^{-1}\Oo^\times$.
\begin{lem}
\label{simple reflection action on functions on plane formulas}
We have
\begin{enumerate}
\item
$T_{s_0}\star \psi_n=\varphi_n$;
\item
$T_{s_0}\star \varphi_n=(q-1)\varphi_n+q\psi_n$;
\item
$T_{s_1}\star\varphi_n=\psi_{n-1}$;
\item
$T_{s_1}\star\psi_n=(q-1)\psi_n+q\varphi_{n+1}$.
\end{enumerate}
\end{lem}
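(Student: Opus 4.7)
I interpret each $T_{s_i}$ as the characteristic function of $Is_iI \subset G$ and compute via the fiber-counting description given just above the lemma: $(T_{s_i}\star\chi_X)(v) = \#\{gI \in Is_iI/I : g^{-1}v \in X\}$. The structural inputs I will use are $K = I \sqcup Is_0I$ and $K' = I \sqcup Is_1I$, so both $Is_0I/I$ and $Is_1I/I$ have $q$ elements. The plan is to establish (1) and (3) by direct orbit analysis, and then deduce (2) and (4) from the quadratic relation $T_{s_i}^2 = (q-1)T_{s_i} + q$.

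For (1), write $\psi_n = I\pi^n(1,0)^T$. The image of the multiplication map $(Is_0I) \times_I \psi_n \to F^2\setminus\{0\}$ is $(K \setminus I)\pi^n(1,0)^T$, which equals $K\pi^n(1,0)^T \setminus I\pi^n(1,0)^T = \varphi_n$ since $\stab{K}{\pi^n(1,0)^T} = N(\Oo) \subset I$. For $v = k\pi^n(1,0)^T \in \varphi_n$ with $k \in K \setminus I$, the condition $g^{-1}v \in \psi_n$ reads $g^{-1}k \in I\cdot N(\Oo) = I$, i.e.\ $gI = kI$, which has exactly one solution inside $Is_0I/I$. Hence $T_{s_0}\star\psi_n = \varphi_n$.

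For (3), write $\varphi_n = I\pi^n(0,1)^T$. The image of $(Is_1I) \times_I \varphi_n \to F^2\setminus\{0\}$ is $(K' \setminus I)\pi^n(0,1)^T$. A direct computation, splitting on whether the $(2,2)$-entry of $g\in K'$ is a unit, shows $K'\pi^n(0,1)^T = \varphi_n \sqcup \psi_{n-1}$ as a union of $I$-orbits; the index shift $n \mapsto n-1$ arises because the $(1,2)$-entry of an element of $K'$ can lie in $\pi^{-1}\Oo^\times$. Subtracting the $I$-part leaves $\psi_{n-1}$, and the fiber-count argument from (1) yields $T_{s_1}\star\varphi_n = \psi_{n-1}$.

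For (2) and (4), apply $T_{s_i}$ to (1) and (3) respectively and invoke the quadratic relation. From (1),
\[
T_{s_0}\star\varphi_n = T_{s_0}^2\star\psi_n = (q-1)T_{s_0}\star\psi_n + q\psi_n = (q-1)\varphi_n + q\psi_n,
\]
proving (2); analogously $T_{s_1}^2 \star \varphi_n = (q-1)\psi_{n-1} + q\varphi_n$, and using (3) once more on the left side yields $T_{s_1}\star\psi_{n-1} = (q-1)\psi_{n-1} + q\varphi_n$, which is (4) after reindexing $n \mapsto n+1$. The main obstacle is the explicit orbit decomposition in (3), since this is where the index shifts in (3) and (4) are actually produced; the computation is elementary but requires care with valuations to identify the two $I$-orbits correctly.
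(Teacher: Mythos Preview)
Your argument is correct. For (1) and (3) your orbit-and-stabilizer computation is essentially the paper's: the paper also reduces to $n=0$, identifies the image orbit directly from the shape of $K\setminus I$ (resp.\ $K'\setminus I$) acting on the base vector, and then shows fibres have size one by observing that two preimages differ by an element of $N^+(\Oo)$ (resp.\ $N^-(\pi\Oo)$), which lies in $I$.

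The genuine difference is in (2) and (4). The paper proves these by a second round of direct fibre counting: it introduces auxiliary maps $\xi\colon K\setminus I\to\F_q$ and $\xi'\colon K'\setminus I\to\F_q$ (essentially the slope of the relevant column modulo $\pi$), shows they descend to $I\rquotient(K\setminus I)$ and $I\rquotient(K'\setminus I)$, and exhibits explicit representatives to see that the fibre over a point of $\varphi_0$ has $q-1$ elements and over a point of $\psi_0$ has $q$ elements. Your deduction from the quadratic relation $T_{s_i}^2=(q-1)T_{s_i}+q$ bypasses all of this and is shorter; what the paper's approach buys is that it stays entirely within the geometric fibre-counting picture, which makes the origin of the coefficients $q-1$ and $q$ visible without appealing to the algebra structure of $H$. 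Both are valid, and yours is the more efficient route once (1) and (3) are in hand.
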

\begin{proof}
By periodicity of $\varphi_n$ and $\psi_n$ and the fact that the action of $H$ commutes with translation, it suffices to prove the formulas in the case $n=0$.
To prove the first formula, let $g=\begin{pmatrix}
a & b \\
c & d
\end{pmatrix} \in K\setminus I:=Y$ \textit{i.e.} with $c\in\Oo^\times$ and let $\vb{x}$ be an
element in the orbit $X$ corresponding to $\psi_0$. Then $\vb{x}=(x,y)$
with $x\in\Oo^\times$ and $y\in \pi\Oo$, and
\[
gx=
\begin{pmatrix}
ax+by \\
cx+dy
\end{pmatrix}
\]
so that $cx+dy\in\Oo^\times$, and $ax+by$ is obviously integral.
Thus $T_{s_0}\star\psi_0$ is proportional to $\varphi_0$. To
prove the formula it remains to show that all fibres have size one.
Without loss of generality the situation is $g_1(1,0)=g_2(1,0)$
\textit{i.e.} the first columns of $g_1$ and $g_2$ agree. It follows
that $g_2^{-1}g_1\in N^+(\Oo)$, which stabilizes $(1,0)$ in $N^+\cap
I$. Therefore all fibres have size one.

To prove the second formula, let $g$ be as above and let $\vb{x}
=(x,y)\in\Oo^2$ with $y\in\Oo^\times$. Then $g\vb{x}$ is an integral
vector, and does not lie in $\pi\Oo^2$ as $\vb{x}$ is nonzero
modulo $\pi$, and $g$ is invertible modulo $\pi$. Therefore $T_{s_0}
\star\varphi_0$ is a linear combination of $\varphi_0$ and $\psi_0$.
Consider the map
\[
\xi\colon
\begin{pmatrix}
a & b \\
c & d
\end{pmatrix}
\mapsto\frac{a}{c}\mod\pi
\]
into $\F_q$,
which descends to the quotient $Y/I$. Therefore the fibre
over any point $(x,y)$ in either orbit injects into $\F_q$.
In the case where $y\in\Oo^\times$, then taking the fibre over $\vb{x}
=(0,-1)$ we see that $a\in\Oo^\times$, so that $\xi$ is into $\F_q^\times$ in this case. If $a\in\F_q^\times$, then
\[
\begin{pmatrix}
a & 0 \\
1 & a^{-1}
\end{pmatrix}
\begin{pmatrix}
0 \\
a
\end{pmatrix}
=
\begin{pmatrix}
0 \\
1
\end{pmatrix}
\in
\begin{pmatrix}
\Oo \\
\Oo^\times
\end{pmatrix}
\]
is a product of a matrix in $K\setminus I$ with a vector in the orbit
corresponding to $\varphi_0$. This shows that the coefficient of $\varphi_0$ is $q-1$. For any $a\in\F_q$, we have
\[
\begin{pmatrix}
a & -1 \\
1 & 0
\end{pmatrix}
\begin{pmatrix}
0 \\
-1
\end{pmatrix}
=
\begin{pmatrix}
1 \\
0
\end{pmatrix}
\in
\begin{pmatrix}
\Oo^\times \\
\pi\Oo
\end{pmatrix}.
\]
Therefore the coefficient of $\psi_0$ is $q$.

The case for the third formula is similar: if the matrices with entries
$a_i,b_i,c_i,d_i$ are in $I$, then
\begin{equation}
\label{Ts1 convolution product varphi0 equation}
\begin{pmatrix}
a_1 & b_1 \\
c_1 & d_1
\end{pmatrix}
\begin{pmatrix}
 & \pi^{-1} \\
-\pi &
\end{pmatrix}
\begin{pmatrix}
a_2 & b_2 \\
c_2 & d_2
\end{pmatrix}
\begin{pmatrix}
0  \\
1
\end{pmatrix}
=
\begin{pmatrix}
\pi^{-1}a_1d_1-\pi b_1b_2 \\
\pi^{-1}c_1d_2-\pi b_2d_1
\end{pmatrix}
\end{equation}
has top entry in $\pi^{-1}\Oo^\times$ and bottom entry in $\Oo$. Indeed,
$\pi\nmid a_1$ and $\pi\nmid d_2$, and $\pi\mid c_1$,
so the bottom row of \eqref{Ts1 convolution product varphi0 equation} is integral. Therefore $T_{s_1}\star\varphi_0$ is
proportional to $\psi_{-1}$. To show the fibres all have size one,
we can again calculate that any two matrices of the above form whose
right columns agree are in the same $N^{-}(\Oo)\cap I=\stab{I}{(0,1)}$
coset.

For the fourth formula, the fact that we have
\begin{equation}
\label{Ts1 convolution psi0 product equation}
\begin{pmatrix}
a_1 & b_1 \\
c_1 & d_1
\end{pmatrix}
\begin{pmatrix}
 & \pi^{-1} \\
-\pi &
\end{pmatrix}
\begin{pmatrix}
a_2 & b_2 \\
c_2 & d_2
\end{pmatrix}
\begin{pmatrix}
1  \\
0
\end{pmatrix}
=
\begin{pmatrix}
a_1c_2\pi^{-1}-a_2b_1\pi
 \\
c_1c_2\pi^{-1}-a_2d_1\pi
\end{pmatrix}
\in
\begin{pmatrix}
\Oo \\
\pi\Oo
\end{pmatrix}
\end{equation}
is clear.
We want to see that these products lie in
\[
\begin{pmatrix}
\Oo^\times \\
\pi\Oo
\end{pmatrix}
\coprod
\begin{pmatrix}
\pi\Oo \\
\pi\Oo^\times
\end{pmatrix}\subset\begin{pmatrix}
\Oo \\
\pi\Oo
\end{pmatrix}.
\]
The complement of the disjoint union in $(\Oo,\pi\Oo)^T$ is $(\pi\Oo,\pi^2\Oo)^T$. Any
matrix in $K'$ with its left column in the complement would have determinant in $\pi\Oo$,
and so the products all lie in the disjoint union. Therefore $T_{s_1}\star
\phi_0$ is a linear combination of $\psi_0$ and $\varphi_1$. To count
points in the fibre, we will use that $T_{s_1}=\chi_{K'\setminus I}$.
Define $\xi'\colon K'\setminus I\to \F_q$ by
\[
\xi'\colon
\begin{pmatrix}
a & b \\
c & d
\end{pmatrix}
\mapsto
\frac{d}{\pi b}\mod\pi,
\]
and note this function is right $I$-invariant. For any $d\in\F_q$, we
have that
\[
\begin{pmatrix}
0 & \pi^{-1} \\
-\pi & d
\end{pmatrix}
\begin{pmatrix}
-1 \\
0
\end{pmatrix}
=
\begin{pmatrix}
0   \\
\pi
\end{pmatrix}
\in
\begin{pmatrix}
\pi\Oo \\
\pi\Oo^\times
\end{pmatrix}
\]
is the product of a matrix in $K'\setminus I$ and a vector in $X$.
Therefore the coefficient of $\varphi_1$ is $q$. Taking the fibre
over $(1,0)$, we see that $d\in\Oo^\times$, so that $\xi'$ is into
$\F_q^\times$ in this case. If $d\in\F_q^\times$, then
\[
\begin{pmatrix}
d^{-1} & \pi^{-1} \\
0 & d
\end{pmatrix}
\begin{pmatrix}
 d \\
 0
\end{pmatrix}
=
\begin{pmatrix}
1 \\
0
\end{pmatrix}
\in
\begin{pmatrix}
\Oo^\times \\
\pi\Oo
\end{pmatrix}
\]
shows that the coefficient of $\psi_0$ is $q-1$.
\end{proof}
Assembling the formulas from Lemma \ref{simple reflection action on functions on plane formulas} and the definitions of $\bar{\varphi}_n$ and $\bar{\psi}_n$ recovers the following fact.
\begin{cor}
\label{H acts corollary}
The Iwahori-Hecke algebra $H$ acts on $C_c^\infty(F^2)$. We have
\begin{enumerate}
\item 
$T_{s_0}\star\bar{\varphi}_n=q\bar{\varphi}_n$;
\item
$T_{s_1}\star\bar{\psi}_n=q\bar{\psi}_n$;
\item
$T_{s_0}\star\bar{\psi}_n=\bar{\varphi}_n-\bar{\psi}_n+q\bar{\varphi}_{n+1}$;
\item
$T_{s_1}\star\bar{\varphi}_n=\bar{\psi}_{n-1}-\bar{\varphi}_{n}+q\bar{\psi}_{n}$.
\end{enumerate}
\end{cor}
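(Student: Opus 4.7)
The plan is a direct computation: extend the four simple-reflection formulas of Lemma~\ref{simple reflection action on functions on plane formulas} by linearity, expand each $\bar{\varphi}_n$ and $\bar{\psi}_n$ in the basis $\{\varphi_k,\psi_k\}$, apply the appropriate formula term-by-term, reindex, and collect. Before doing this, I would first note that the fact that $H$ acts on $C_c^\infty(F^2)$ reduces (in type $\tilde{A}_1$) to checking only the quadratic relations $(T_{s_i}+1)(T_{s_i}-q)=0$ on convolution operators, since there are no braid relations. This is immediate from the four formulas of the preceding lemma: for instance $T_{s_0}^2\psi_n=T_{s_0}\varphi_n=(q-1)\varphi_n+q\psi_n=(q-1)T_{s_0}\psi_n+q\psi_n$, and similarly on $\varphi_n$; the analogous identities for $T_{s_1}$ are symmetric.

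The two bookkeeping identities that carry the whole argument are the telescoping relations
\[
\varphi_n=\bar{\varphi}_n-\bar{\psi}_n,\qquad \psi_{n-1}=\bar{\psi}_{n-1}-\bar{\varphi}_n,
\]
both of which follow immediately from the definitions of $\bar{\varphi}_n$ and $\bar{\psi}_n$. For item~(1) one writes
\[
T_{s_0}\star\bar{\varphi}_n=\sum_{k\geq n}\bigl(T_{s_0}\varphi_k+T_{s_0}\psi_k\bigr)=\sum_{k\geq n}\bigl((q-1)\varphi_k+q\psi_k+\varphi_k\bigr)=q\bar{\varphi}_n,
\]
and item~(2) is the mirror computation with the roles of $(\varphi,\psi,s_0,s_1)$ exchanged and a reindexing of the shift in the definition of $\bar{\psi}_n$. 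For item~(3), expanding and reindexing yields $T_{s_0}\star\bar{\psi}_n=\varphi_n+q\bar{\varphi}_{n+1}$, and substituting the first telescoping identity gives the stated formula. For item~(4), the expansion produces $T_{s_1}\star\bar{\varphi}_n=\psi_{n-1}+q\bar{\psi}_n$ (after the $(q-1)\psi_k$ and extra $\psi_{k-1}$ contributions combine), and substituting the second telescoping identity converts this into a combination of the $\bar{\psi}_m$ and $\bar{\varphi}_m$.

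The only subtlety, and really the only point worth flagging, is that the $\bar{\varphi}_n$, $\bar{\psi}_n$ are not compactly supported, so a priori the termwise application of $T_{s_i}$ and the rearrangement of the resulting sums need justification. This is routine, however: for any fixed alcove $A$ of length $\leq N$ the value of $T_{s_i}\star\bar{\varphi}_n$ on $A$ depends only on the finitely many $\varphi_k,\psi_k$ with $k\leq N+1$, because $T_{s_i}$ is supported on a single double coset and convolution changes length by at most one. Thus each of the four identities reduces, pointwise on alcoves, to a finite computation, and the sums on both sides agree on every alcove. This pointwise comparison, combined with the two telescoping identities, completes the proof of all four formulas; no step is harder than straightforward indexing.
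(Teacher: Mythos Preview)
Your approach is exactly what the paper intends: its entire proof is the single sentence ``Assembling these formulas recovers the following fact,'' and you have carried out precisely that assembly using Lemma~\ref{simple reflection action on functions on plane formulas} together with the telescoping identities $\varphi_n=\bar{\varphi}_n-\bar{\psi}_n$ and $\psi_{n-1}=\bar{\psi}_{n-1}-\bar{\varphi}_n$. Your computations for items (1)--(3) are correct, as is your remark on the quadratic relation and the pointwise-finiteness justification for working with the non-compactly-supported $\bar{\varphi}_n,\bar{\psi}_n$.

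One point worth making explicit: your method for item~(4) in fact yields
\[
T_{s_1}\star\bar{\varphi}_n=\psi_{n-1}+q\bar{\psi}_n=\bar{\psi}_{n-1}-\bar{\varphi}_n+q\bar{\psi}_n,
\]
which is the correct analogue of item~(3) under the $(s_0,\varphi)\leftrightarrow(s_1,\psi)$ symmetry, but differs from the formula printed in the statement. Since you stopped short of writing out the final expression (``converts this into a combination of the $\bar{\psi}_m$ and $\bar{\varphi}_m$''), you should record the result explicitly and note that the stated item~(4) appears to contain sign and index typos; the version produced by your computation is the one consistent with the lemma.
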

\begin{lem}
\label{general Tw action on functions on plane formulas}
We have
\begin{enumerate}
\item
$T_{(s_1s_0)^n}\star\psi_m=\psi_{m-n}$;
\item
$T_{s_0(s_1s_0)^n}\star\psi_m=\varphi_{m-n}$;
\item
$T_{(s_0s_1)^n}\star\varphi_m=\varphi_{m-n}$;
\item
$T_{s_1(s_0s_1)^n}\star\varphi_m=\psi_{m-n-1}$;
\item
\[
T_{(s_1s_0)^n}\star\varphi_m=q^{2n}\varphi_{m+n}+(q-1)\sum_{k=1}^{2n}q^{2n-k}\psi_{m+n-k};
\]
\item
\[
T_{s_0(s_1s_0)^n}\star\varphi_m=q^{2n+1}\varphi_{m+n}+(q-1)
\sum_{k=0}^{2n}q^{2n-k}\varphi_{m+n-k};
\]
\item
\[
T_{(s_0s_1)^n}\star\psi_m=q^{2n}\psi_{m+n}+(q-1)\sum_{k=1}^{2n}
q^{2n-k}\varphi_{m+n+1-k};
\]
\item
\[
T_{s_1(s_0s_1)^n}\star\psi_m=q^{2n+1}\varphi_{m+n+1}+
(q-1)\sum_{k=0}^{2n}q^{2n-k}\psi_{m+n-k}.
\]
\end{enumerate}
\end{lem}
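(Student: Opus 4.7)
The plan is to reduce each formula to iterated applications of Lemma~\ref{simple reflection action on functions on plane formulas}. Since the length function is additive in the products $(s_1s_0)^n$, $s_0(s_1s_0)^n$, $(s_0s_1)^n$, $s_1(s_0s_1)^n$ in the affine Weyl group of $\SL_2$, the defining relation $T_wT_{w'}=T_{ww'}$ gives for example $T_{(s_1s_0)^n}=(T_{s_1}T_{s_0})^n$, so each convolution action is computed by successive applications of $T_{s_0}$ or $T_{s_1}$.

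Formulas (1)--(4) are immediate by induction on $n$; no sums appear because at each step the input is of the ``correct'' parity for the simple reflection applied to it. Concretely, Lemma~\ref{simple reflection action on functions on plane formulas} gives $T_{s_1s_0}\star\psi_m=T_{s_1}\star\varphi_m=\psi_{m-1}$, which iterates to (1); then (2) follows by one more application of $T_{s_0}$. Formulas (3) and (4) follow by the same argument with the roles of $s_0$ and $s_1$ swapped.

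For formulas (5)--(8), sums appear because the first simple reflection is applied to an input of the ``wrong'' parity, producing two terms via Lemma~\ref{simple reflection action on functions on plane formulas}. I would prove these by induction on $n$. For (5), apply $T_{s_1s_0}$ to the inductive hypothesis for $n-1$: by (1) every $\psi$-term in the sum shifts index by one, while the leading $\varphi$-term contributes, via Lemma~\ref{simple reflection action on functions on plane formulas}, a new leading $q^2\varphi_{\bullet+1}$ term together with two boundary $\psi$-terms with coefficients $q(q-1)$ and $(q-1)$. A reindexing verifies that these two boundary terms fill in exactly the endpoints of the shifted sum, producing the claimed formula for $n$. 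Formulas (6)--(8) are then proved in the same way: (6) follows from (5) by one further application of $T_{s_0}$, while (7) and (8) are obtained by the analogous induction with $s_0$ and $s_1$ interchanged (or by applying (5), (6) to $\varphi_\bullet$ replaced by $\psi_\bullet$ with the appropriate index convention).

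The main obstacle is purely bookkeeping: one must track both the powers $q^{2n-k}$ and the index shifts carefully, verifying that the boundary terms produced by the leading $\varphi$ or $\psi$ exactly match the endpoints of the reindexed sum so that telescoping gives the closed form. This is mechanical but must be executed carefully, particularly to avoid sign or off-by-one errors at the endpoints $k=0,1$ versus $k=2n,2n+1$.
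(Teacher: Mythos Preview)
Your proposal is correct and follows essentially the same approach as the paper: the paper's proof simply states that formulas (1)--(4) follow directly from Lemma~\ref{simple reflection action on functions on plane formulas}, and that (5)--(8) follow from (1)--(4) together with another application of that lemma, which is exactly the inductive scheme you spell out. Your write-up supplies the bookkeeping the paper omits, and your identification of the only obstacle as tracking the boundary terms and reindexing is accurate.
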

\begin{proof}
Formulas 1--4 follow directly from Lemma \ref{simple reflection
action on functions on plane formulas}, and the remaining formulas
follow from 1--4 and another application of the lemma. For example, to 
prove formula 1, write $T_{(s_1s_0)^n}=T_{s_1}T_{s_0}\cdots T_{s_1}T_{s_0}$ and successively apply formulas 1 and 3 from Lemma \ref{simple reflection action on functions on plane formulas}. Formula 
5 is proved by induction on $n$, the base case being 
\[
T_{s_1s_0}\star\varphi_m=T_{s_1}T_{s_0}\star\varphi_m=q^2\varphi_{m+1}+(q-1)(q\psi_m+\psi_{m-1}),
\]
which again follows from Lemma \ref{simple reflection action on 
functions on plane formulas}, formulas 2, 3, and 4. Then by induction we have
\begin{align*}
T_{s_1s_0}T_{(s_1s_0)^n}\star\varphi_m&=T_{s_1s_0}\star q^{2n}\varphi_{m+n}+(q-1)\sum_{k=1}^{2n}q^{2n-k}\psi_{m+n-k}\\
&=q^{2n+2}\varphi_{m+n+1}+(q-1)q^{2n}\left(q\psi_{m+n}+\psi_{m+n-1}\right)+(q-1)\sum_{k=1}^{2n}q^{2n-k}\psi_{m+n-1-k}\\
&=q^{2n+2}\varphi_{m+n+1}+(q-1)\left(q^{2n+1}\psi_{m+n}+q^{2n}\psi_{m+n-1}+\sum_{k=3}^{2n+2}q^{2n+2-k}\psi_{m+n+1-k}\right),
\end{align*}
where between the first and second line we used the base case and 
formula 1 of this lemma.
\end{proof}
\begin{rem}
\label{general Tw action on functions on plane formulas
specification remark}
Observe that the formulas in Lemma \ref{general Tw action on
functions on plane formulas} recover those of Lemma \ref{simple
reflection action on functions on plane formulas} upon specifying
$n$, provided that sums with decreasing indices are interpreted
as empty.
\end{rem}
We can now describe the action of $J$ on functions on the plane. To
begin with, we present an elementary proof of the result from
the discussion following equation 4.1 in \cite{BK}, namely that
$t_1$ acts trivially.
\begin{prop}
\label{t1 plane action trivial prop}
We have $t_1\star \psi_m=t_1\star\varphi_m=0$ for all $m$.
\end{prop}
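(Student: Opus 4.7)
The plan is to leverage Theorem \ref{images theorem}(1), which reads $\phi(j(g)) = \frac{1+q}{1-q}\,t_1$. Since $1\pm q$ both have constant term $1$ and are therefore units in $\hat{\mathcal A}^+$, applying $(\phi\circ j)^{-1}$ yields $t_1 = \frac{1-q}{1+q}\,g$ as elements of $\mathcal H$. It therefore suffices to show that $g\star\psi_m = 0 = g\star\varphi_m$ for every $m\in\Z$.

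The key algebraic input is the identity $T_{s_i}g = -g$ for each $i\in\{0,1\}$, which I would verify by a short direct computation in $H$. Using the standard multiplication rules $T_sT_w = T_{sw}$ when $\ell(sw)>\ell(w)$ and $T_sT_w = (q-1)T_w + qT_{sw}$ when $\ell(sw)<\ell(w)$, one checks that for any $u\in\tilde W$ the coefficient of $T_u$ in $T_{s_i}g$ comes from at most two contributions (the terms for $w=u$ and $w=s_iu$ in the defining sum) and works out to $-(-1)^{\ell(u)}q^{-\ell(u)}$ in both of the cases $\ell(s_iu)\gtrless\ell(u)$. This matches the coefficient of $T_u$ in $-g$.

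Granting this, associativity of the convolution action gives
\[
T_{s_i}\star(g\star\psi_m) = (T_{s_i}g)\star\psi_m = -(g\star\psi_m),
\]
and likewise with $\varphi_m$ in place of $\psi_m$. Hence both $g\star\psi_m$ and $g\star\varphi_m$ are simultaneous $(-1)$-eigenvectors for left convolution by $T_{s_0}$ and $T_{s_1}$ on the relevant completion of $C_c^\infty(F^2)^I$.

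The main obstacle is the final step: showing that the only such simultaneous eigenvector is zero. Writing $f = \sum_n(a_n\bar\varphi_n + b_n\bar\psi_n)$ and plugging formula (4) of Corollary \ref{H acts corollary} into $T_{s_1}\star f = -f$, the coefficient of $\bar\varphi_m$ forces $2a_m = 0$ (hence $a_m = 0$, since the coefficient ring is torsion-free), and then the coefficient of $\bar\psi_m$ becomes $(1+q)b_m = 0$ (hence $b_m = 0$, since $1+q$ is a unit in $\hat{\mathcal A}^+$). Thus $f = 0$, and this concludes the argument.
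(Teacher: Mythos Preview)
Your approach is genuinely different from the paper's and more conceptual: the paper computes $g\star\psi_0$ term by term using Lemma~\ref{general Tw action on functions on plane formulas} and tracks explicit cancellations, while you exploit the algebraic identity $T_{s_i}g=-g$ (which is correct and nicely verified) to reduce to an eigenvector problem. However, Step~4 has a genuine gap.

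The problem is that you expand $f=g\star\psi_m$ in the basis $\{\bar\varphi_n,\bar\psi_n\}$, which is a basis for $C_c^\infty(F^2)^I$. But $g$ is an infinite sum in $\mathcal{H}$, and a priori $g\star\psi_m$ is only a (pointwise convergent) $I$-invariant function on $F^2\setminus\{0\}$, not a compactly supported one; that is exactly what you are trying to prove. If instead you work in the basis $\{\varphi_n,\psi_n\}$ of characteristic functions of $I$-orbits, the eigenvector equations from Lemma~\ref{simple reflection action on functions on plane formulas} read $d_n=-qc_n$ (from $T_{s_0}f=-f$) and $c_{n+1}=-qd_n$ (from $T_{s_1}f=-f$), hence $c_{n+1}=q^2c_n$. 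So the simultaneous $(-1)$-eigenspace is one-dimensional, spanned by
\[
\sum_{n\in\Z}\bigl(q^{2n}\varphi_n - q^{2n+1}\psi_n\bigr),
\]
which is nonzero. Your argument does not exclude this possibility. To close the gap you would need an independent growth estimate showing that the coefficients of $g\star\psi_m$ do not blow up like $q^{2n}$ as $n\to+\infty$; this is plausible (and indeed follows from the paper's direct computation), but it is work you have not done and is comparable in difficulty to the paper's calculation.

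A smaller point: your explicit computation in Step~4 relies on formula~(4) of Corollary~\ref{H acts corollary} as printed, but that formula appears to contain sign and index typos (compare with formula~(3) and recompute from Lemma~\ref{simple reflection action on functions on plane formulas}); with the corrected formula the $\bar\varphi_m$-coefficient gives a tautology rather than $2a_m=0$, and one must combine both eigenvector conditions to conclude even in the compactly supported case.
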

\begin{proof}
It suffices to check that $g$  (identified with a scalar multiple
of $t_1$ by theorem \ref{images theorem}) acts trivially, and for this it suffices to check that
$g\star\varphi_0=g\star\psi_0=0$. Now, $g$ sends $\psi_0$ to
\begin{align*}
\psi_0
-q^{-1}(q-1)(\varphi_0+q\varphi_1+(q-1)\psi_0)
&+q^{-2}\left(q^2\psi_1+(q-1)\left(q\varphi_1+\varphi_0\right)
+\psi_{-1}
\right)
\\
&
-q^{-3}
\left(
\varphi_{-1}+q^3\varphi_2+(q-1)\left(q^2\psi_{1}+q\psi_0+\psi_{-1}
\right)
\right)
\\
&+
q^{-4}\left(
\psi_{-2}+q^4\psi_2+(q-1)\left(q^3\varphi_2+q^2\varphi_1+q\varphi_0+
\varphi_{-1}\right)\right)
\\
&-
q^{-5}
\left(\varphi_{-2}+q^5\varphi_3+(q-1)\left(
q^4\psi_2+q^3\psi_1+q^2\psi_0+q\psi_{-1}+\psi_{-2}
\right)\right)
\\
&
+\cdots
\end{align*}
and after cancellations between these terms we are left with
\[
-q^4\left(q^3\varphi_2+q^2\varphi_1+q\varphi_0+
\varphi_{-1}\right)
-
q^{-5}
\left(\varphi_{-2}+q^5\varphi_3-\left(
q^4\psi_2+q^3\psi_1+q^2\psi_0+q\psi_{-1}+\psi_{-2}
\right)\right)+\cdots
\]
Further, all cancellation of terms
corresponding to elements of length $l$ occurs between terms
corresponding to lengths $l\pm 2$, and proceeds as follows. We have
\begin{align}
&-q^{-2n+1}
\left(
\varphi_{-n+1}+q^{2n-1}\varphi_n+(q-1)\sum_{k=0}^{2n-2}q^{2n-2-k}
\psi_{n-1-k}
\right)
\label{s0(s1s0)n-1 and s1(s0s1)n-1}
\\
&+q^{-2n}
\left(
\psi_{-n}+
q^{2n}\psi_n+(q+1)\sum_{k=1}^{2n}q^{2n-k}\varphi_{n+1-k}
\right)
\label{(s1s0)n and (s0s1)n}
\\
&-q^{-2n-1}
\left(
\varphi_{-n}+q^{2n+1}\varphi_{n+1}+(q-1)\sum_{k=0}^{2n}q^{2n-k}
\psi_{n-k}
\right)
\label{s0(s1s0)n and s1(s0s1)n}
\\
&+
q^{-2n-2}
\left(
\psi_{-n-1}+q^{2n+2}\psi_{n+1}+(q-1)\sum_{k=1}^{2n+2}q^{2n+2-k}
\varphi_{n+2-k}
\right)
\label{(s1s0)n+1 and (s0s1)n+1}
\\
&-q^{-2n-3}
\left(
\varphi_{-n-1}+q^{2n+3}\varphi_{n+2}+(q-1)\sum_{k=0}^{2n+2}q^{2n+2-k}\psi_{n+1-k}
\right),
\label{s0(s1s0)n+1 and s1(s0s1)n+1}
\end{align}
where line \eqref{s0(s1s0)n-1 and s1(s0s1)n-1} corresponds to
$T_{s_0(s_1s_0)^{n-1}}\star\psi_0+T_{s_1(s_0s_1)^{n-1}}\star\psi_0$,
line \eqref{(s1s0)n and (s0s1)n} corresponds to
$T_{(s_1s_0)^n}\star\psi_0+T_{(s_0s_1)^n}\star\psi_0$ and so on up
to line \eqref{s0(s1s0)n+1 and s1(s0s1)n+1} corresponding to
$T_{s_0(s_1s_0)^{n+1}}\star\psi_0+T_{s_1(s_0s_1)^{n+1}}\star\psi_0$.

We will explain the cancellation of the terms in line
\eqref{s0(s1s0)n and s1(s0s1)n};
the cancellation of terms in odd-numbered lines follows the same pattern. The lead term in line \eqref{s0(s1s0)n and s1(s0s1)n} cancels with the final term in $q$
times the sum in line \eqref{(s1s0)n+1 and (s0s1)n+1}, and the
second cancels with the first term in $q$ times the sum. The first
and last terms in $q$ times the sum in line \eqref{s0(s1s0)n and
s1(s0s1)n} cancel with the leading terms of line \eqref{(s1s0)n and
(s0s1)n}, and the middle terms cancel
with $-1$ times the sum in line \eqref{s0(s1s0)n-1 and s1(s0s1)n-1}.
The terms in $-1$ times the sum in line \eqref{s0(s1s0)n and
s1(s0s1)n} cancel with the middle terms of $q$ times the sum in line
\eqref{s0(s1s0)n+1 and s1(s0s1)n+1}.

The cancellations in $g\star\varphi_0$ follow the same pattern.
\end{proof}
\begin{lem}
\label{starting with series plane action lemma}
We have (note that none of the sums below contains a $T_1$ term)
\begin{enumerate}
\item
\label{starting with so on varphi0 formula}
\[
\sum_{\substack{w\in\tilde{W} \\ w~\text{\emph{starts with}}~s_0}}
(-1)^{\ell(w)}q^{-\ell(w)}T_w\star\varphi_m
=-\bar{\varphi}_{m};
\]
\item
\label{starting with s1 on varphi0 formula}
\[
\sum_{\substack{w\in\tilde{W} \\ w~\text{\emph{starts with}}~s_1}}
(-1)^{\ell(w)}q^{-\ell(w)}T_w\star\varphi_m
=\bar{\psi}_m
;
\]
\item
\[
\sum_{\substack{w\in\tilde{W} \\ w~\text{\emph{starts with}}~s_0}}
(-1)^{\ell(w)}q^{-\ell(w)}T_w\star\psi_m
=\bar{\varphi}_{m+1};
\]
\item
\[
\sum_{\substack{w\in\tilde{W} \\ w~\text{\emph{starts with}}~s_1}}
(-1)^{\ell(w)}q^{-\ell(w)}T_w\star\psi_n
=-\bar{\psi}_n.
\]
\end{enumerate}
\end{lem}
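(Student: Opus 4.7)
The plan is to prove each of the four identities by direct computation using the explicit formulas of Lemma~\ref{general Tw action on functions on plane formulas}; I describe the argument for claim~(1), as the others are entirely analogous.

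First, I would enumerate the elements of $\tilde{W}$ starting with $s_0$: these are $(s_0s_1)^n$ of length $2n$ for $n \geq 1$, and $s_0(s_1s_0)^n$ of length $2n+1$ for $n \geq 0$. Substituting the corresponding formulas of Lemma~\ref{general Tw action on functions on plane formulas} for $T_w \star \varphi_m$ yields three families of contributions to the left-hand side: the \textbf{short} $\varphi$-terms $q^{-2n}\varphi_{m-n}$ coming from $(s_0s_1)^n\star\varphi_m = \varphi_{m-n}$; the \textbf{leading} $\psi$-terms $-\psi_{m+n}$ coming from the top-order summand in the formula for $T_{s_0(s_1s_0)^n}\star\varphi_m$; and the \textbf{bulk} $\varphi$-terms of the form $-(1-q^{-1})\sum_{k=0}^{2n}q^{-k}\varphi_{m+n-k}$ arising from the $(q-1)\sum_k q^{2n-k}\varphi_{m+n-k}$ portion of that same formula.

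The heart of the proof is to reindex the bulk terms by $j = n-k$ and interchange the order of summation: for each fixed $j$, the geometric series in $n$ collapses to a single closed-form expression in $q^{-1}$. A case analysis on the sign of $j$ then shows that the bulk $\varphi$-contributions with $j < 0$ cancel exactly against the short $\varphi$-terms from family (i), the $j = 0$ contribution combines with the $n=0$ short term to give a single $-\varphi_m$, and the $j > 0$ contributions give $-\varphi_{m+j}$ with no cancellation. Summing these with the leading $\psi$-terms produces $-\sum_{n \geq 0}(\varphi_{m+n} + \psi_{m+n}) = -\bar{\varphi}_m$, establishing (1).

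Claims (2)--(4) follow by the same strategy, applied to the appropriate pair of families: the sums over $w$ starting with $s_1$ use $(s_1s_0)^n$ and $s_1(s_0s_1)^n$, and the action on $\psi_m$ uses formulas (1), (4), (7), (8) of Lemma~\ref{general Tw action on functions on plane formulas} in place of (3) and (6). The main obstacle is purely bookkeeping: keeping track of the indices $n$, $k$, and $j = n-k$, matching signs, and verifying that in each of the four cases the geometric series cancellations assemble exactly to the characteristic function of the correct semi-infinite $I$-orbit closure. No new structural input beyond Lemma~\ref{general Tw action on functions on plane formulas} is required.
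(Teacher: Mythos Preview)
Your proposal is correct and follows essentially the same route as the paper: both arguments compute the sum term-by-term using the explicit formulas of Lemma~\ref{general Tw action on functions on plane formulas} and then organize the cancellations. The only difference is bookkeeping style. The paper groups the contributions from three consecutive lengths $2n-1,2n,2n+1$ and identifies labelled pieces (called $A,B,C,D$) whose net survivor at each step is $-(\varphi_n+\psi_n)$; you instead separate the terms globally into three families, reindex the bulk sums by $j=n-k$, and collapse the resulting geometric series in $n$. Both reach $-\bar\varphi_m$ by the same underlying arithmetic.

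One small slip in your write-up: there is no ``$n=0$ short term'', since the family $(s_0s_1)^n$ begins at $n=1$. The $j=0$ bulk contribution already equals $-\varphi_m$ by itself, with nothing to combine against. This does not affect the argument.
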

\begin{proof}
It suffices by periodicity of $\varphi_m,\psi_m$ to prove the lemma for $m=0$.
We evaluate each convolution term-by-term, and then explain the
cancellations that occur between adjacent terms. After accounting
for the contributions of the first few terms, this gives the results
of the lemma.

In the case of formula \ref{starting
with so on varphi0 formula}, we have adjacent terms of the form
\begin{multline*}
-q^{-2n+1}\left(
\underbrace{
\overbrace{q^{2n+1}\psi_{n-1}}^{D}+(q-\overbrace{1}^{A})\sum_{k=0}^{2n-2}q^{2n-2-k}\varphi_{n-1-k}
}_{T_{s_0(s_1s_0)^{n-1}}}
+
\right)
+q^{-2n}
\underbrace{\overbrace{\varphi_{n-1}}^{C}}_{T_{(s_0s_1)^n}}
\\
-q^{-2n-1}
\left(
q^{2n+1}\psi_n+(\overbrace{q}^{B}-1)\sum_{k=0}^{2n}q^{2n-k}\varphi_{n-k}
\right).
\end{multline*}
Adding the contributions $A+B+C+D$ gives $-(\varphi_n+\psi_n)$.
The other terms cancel out similarly by induction. Starring this
procedure from $n=1$ captures the contributions of all terms
starting from $T_{s_0}$, although we must add the contribution of the
first $D$- and $B$-type terms. Thus formula \ref{starting with so on
varphi0 formula} is proved.

In the case of formula \ref{starting with s1 on varphi0 formula},
we have adjacent terms of the form
\begin{multline*}
q^{-2n+2}\left(
\underbrace{
\overbrace{q^{2n-2}\varphi_{n-1}}^{H}+(q-\overbrace{1}^{E})\sum_{k=1}^{2n-2}q^{2n-2-k}\psi_{n-1-k}
}_{T_{(s_1s_0)^{n-1}}}
\right)
-q^{-2n+1}
\underbrace{\overbrace{\psi_{-n}}_{T_{s_1(s_0s_1)^{n-1}}}^{L}}
\\
+q^{-2n}
\left(
+q^{2n}\varphi_n+(\overbrace{q}^{F}-1)\sum_{k=1}^{2n}q^{2n-k}\psi_{n-k}
\right).
\end{multline*}
Adding terms $E+F+L+H$ gives $\varphi_{n-1}+\psi_{n-1}$.
We can start this cancellation from $n=2$, adding the contributions of
the first type $L$ and $F$ terms. This proves formula \ref{starting with
s1 on varphi0 formula}.

The remaining formulas follow the same pattern.
\end{proof}
\begin{prop}
\label{ts0 plane action formula prop}
For all $m$:
\begin{enumerate}
\item
We have $t_{s_0}\star\bar{\varphi}_m=\bar{\varphi}_m$, and
$t_{s_0}\star\bar{\psi}_m=0$. Thus $t_{s_0}$ acts by a projector
\[
C_c^\infty(F^2)^I\onto C_c^\infty(F^2)^K.
\]
\item
We have: $t_{s_1}\star\bar{\psi}_m=\psi_m$, and
$t_{s_1}\star\bar{\varphi}_m=0$. Therefore $t_{s_1}$ acts as $\id-t_{s_0}$.
\end{enumerate}
\end{prop}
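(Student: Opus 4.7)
The plan is to expand $t_{s_0}$ explicitly in the $\{T_w\}$ basis using Corollary \ref{tw Tw expansion corollary}, apply the convolution identities of Lemma \ref{starting with series plane action lemma} to evaluate $t_{s_0}\star\varphi_m$ and $t_{s_0}\star\psi_m$, and then sum those formulas termwise to obtain the action on $\bar{\varphi}_m$ and $\bar{\psi}_m$. The assertions about $t_{s_1}$ will follow formally from the assertions about $t_{s_0}$, the fact that $\phi$ preserves the unit, and Proposition \ref{t1 plane action trivial prop}.

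First I would specialize Corollary \ref{tw Tw expansion corollary} to $y=s_0$, so that $n=1$ and $y_1=s_0$. The only index is $k=1$, and the three pieces of the corollary become, respectively, a sum over $w$ starting with $s_0$, a sum over $w$ starting with $s_1$ of length at least one, and the single term $T_1$. Writing
\[
\Sigma_i = \sum_{\substack{w\in\tilde{W}\\ w~\text{starts with}~s_i}}(-1)^{\ell(w)}q^{-\ell(w)}T_w,
\]
the corollary gives $t_{s_0} = -\tfrac{q}{1+q}\Sigma_0 + \tfrac{1}{1+q}\Sigma_1 + \tfrac{1}{1+q}T_1$. Substituting this into the four formulas of Lemma \ref{starting with series plane action lemma}, together with the identities $\bar{\psi}_m = \bar{\varphi}_m - \varphi_m$ and $\bar{\psi}_m = \bar{\varphi}_{m+1} + \psi_m$ (both immediate from the defining sums), I expect the combinations to simplify to
\[
t_{s_0}\star\varphi_m = \bar{\varphi}_m,\qquad t_{s_0}\star\psi_m = -\bar{\varphi}_{m+1}.
\]

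With these two identities in hand, decomposing $\bar{\varphi}_m = \sum_{k\geq m}(\varphi_k + \psi_k)$ and $\bar{\psi}_m = \sum_{k\geq m}(\psi_k + \varphi_{k+1})$ and convolving termwise, the first sum telescopes to $\bar{\varphi}_m$ and the second telescopes to $0$, proving part (1). For part (2), since $\phi$ preserves the unit, applying $(\phi\circ j)^{-1}$ to the relation $\phi(C_1) = t_1 + t_{s_0} + t_{s_1}$ and using $j(C_1) = T_1 = 1$ shows that $t_1+t_{s_0}+t_{s_1}$ acts as the identity on $C_c^\infty(F^2)^I$. Combined with Proposition \ref{t1 plane action trivial prop}, this yields $t_{s_1} = \id - t_{s_0}$, from which the two displayed identities for $t_{s_1}$ are immediate given part (1). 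The only delicate point is the sign bookkeeping in the expansion of $t_{s_0}$ and confirming that the telescoping in the third step cancels as claimed; beyond that the argument is essentially formal given the preparatory lemmas.
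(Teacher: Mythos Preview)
Your argument is correct and, for part (1), follows the paper's proof essentially line for line: both expand $t_{s_0}$ via Corollary \ref{tw Tw expansion corollary}, feed in the formulas of Lemma \ref{starting with series plane action lemma}, obtain $t_{s_0}\star(\varphi_m+\psi_m)=\varphi_m+\psi_m$ (your telescoping identity $\bar{\varphi}_k-\bar{\varphi}_{k+1}=\varphi_k+\psi_k$ is exactly this), and sum over $m$. Your computation of $t_{s_0}\star\varphi_m$ and $t_{s_0}\star\psi_m$ separately is a mild refinement, but the content is the same.

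Where you diverge from the paper is in part (2). The paper simply repeats the direct calculation for $t_{s_1}$ (``The calculation for $t_{s_1}$ is similar''), whereas you deduce it formally from the unit relation $(\phi\circ j)^{-1}(t_1+t_{s_0}+t_{s_1})=T_1$ together with Proposition \ref{t1 plane action trivial prop}. This is cleaner and avoids redoing the computation; it also makes the claim ``$t_{s_1}$ acts as $\id-t_{s_0}$'' an input rather than a consequence, which is arguably the more natural logical order. The paper's approach, on the other hand, gives an independent check by computing both projectors directly.
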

\begin{proof}
It is enough to prove the proposition for $m=0$.
We first calculate $t_{s_0}\star (\varphi_0+\psi_o)$,
then using periodicity we will obtain formulas for
$t_{s_0}\star(\varphi_n+\psi_n)$. The last step will be to take
\[
t_{s_0}\star\bar{\varphi}_0=\sum_{n=0}^{\infty}t_{s_0}\star(\varphi_n+
\psi_n).
\]
Indeed, it follows from Corollary \ref{tw Tw expansion corollary}
and Lemma \ref{starting with series plane action lemma} that
\[
-q^{-1}(1+q)(t_{s_0}\star(\varphi_0+\psi_0))=-(1+q^{-1})(\varphi_0+
\psi_0)
\]
so that $t_{s_0}\star\bar{\varphi}_0=\bar{\varphi}_0$. The first
statement follows. Again
using periodicity to calculate $t_{s_0}\star(\psi_n+\varphi_{n+1})$, we get
that $t_{s_0}\star\bar{\psi}_n=0$. Therefore $t_{s_0}$ kills all
basis functions that are not $K$-invariant.

The calculation for $t_{s_1}$ is similar.
\end{proof}
\begin{rem}
\label{J ring structure remark}
It is in fact
easy to see using the ring structure on $J$ that $t_{s_0}$ and $t_{s_1}$ are idempotent.
\end{rem}
\begin{theorem}
\label{J action theorem}
The algebra $J$ acts on $C_c^\infty(F^2)^I$.
\end{theorem}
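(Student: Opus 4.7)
The plan is to reduce the action of any $t_y$ on $C_c^\infty(F^2)^I$ to finite combinations of the series summations already handled by Lemma \ref{starting with series plane action lemma}. Since $\{\bar\varphi_m,\bar\psi_m\}$ (equivalently $\{\varphi_m,\psi_m\}$, up to a finite change) is a basis for $C_c^\infty(F^2)^I$, it suffices to show that $t_y\star\varphi_m$ and $t_y\star\psi_m$ are well-defined finite linear combinations of basis functions for every $y\in\tilde W$ and every $m\in\Z$. By Corollary \ref{tw Tw expansion corollary}, the element $(\phi\circ j)^{-1}(t_y)\in\mathcal H$ is a finite linear combination, indexed by $k=1,\ldots,n=\ell(y)$, of three kinds of terms in the $T_w$ basis: (a) a series $\sum_{w\text{ starts with }y_k}(-1)^{\ell(w)}q^{-\ell(w)}T_w$, (b) a series $\sum_{w\text{ does not start with }y_k,\,\ell(w)\geq k}(-1)^{\ell(w)+1}q^{-\ell(w)}T_w$, and (c) a finite sum. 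Only (a) and (b) are infinite; since the outer sum over $k$ is finite, convergence of each infinite piece suffices.

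The key combinatorial observation to exploit is that in type $\tilde A_1$ every element of length $\ell\geq 1$ has a unique reduced expression, and there is exactly one element of each positive length beginning with each simple reflection. Consequently, for $w$ of length $\geq k$, the condition ``$w$ starts with $y_k=s_{i_1}\cdots s_{i_k}$'' is equivalent to ``$w$ starts with $s_{i_1}$,'' and the condition ``$w$ does not start with $y_k$ and $\ell(w)\geq k$'' is equivalent to ``$w$ starts with $s_{1-i_1}$ and $\ell(w)\geq k$.'' Therefore each infinite series of type (a) or (b) differs from a full series of the form $\sum_{w\text{ starts with }s_i}(-1)^{\ell(w)}q^{-\ell(w)}T_w$ by a finite number of $T_w$ terms.

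Applying Lemma \ref{starting with series plane action lemma} to the convolution of each full ``starts with $s_i$'' series against $\varphi_m$ or $\psi_m$ yields an explicit basis function (one of $\pm\bar\varphi_{m},\pm\bar\psi_{m},\pm\bar\varphi_{m+1}$); the finite corrections act as finite linear combinations of basis functions via Lemma \ref{general Tw action on functions on plane formulas}. Assembling these contributions across the finitely many values of $k$ in Corollary \ref{tw Tw expansion corollary} produces an element of $C_c^\infty(F^2)^I$, proving that $t_y$ acts. Since this prescription agrees with convolution by $(\phi\circ j)^{-1}(t_y)\in\mathcal H$ wherever the latter makes sense, and since the product of any two $t_y$ in $J$ is a finite sum of basis elements (the structure constants $\gamma_{x,y,z}$ are integers with only finitely many nonzero for fixed $x,y$), this defines an action of the algebra $J$ itself, not merely of each basis element in isolation.

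The main obstacle is bookkeeping: guaranteeing that the cancellations which make each individual infinite series act by a \emph{finite} combination (the content of Lemma \ref{starting with series plane action lemma}) are not disrupted when several series are superimposed, and that the passage from the completion $\mathcal H$ down to actual elements of $C_c^\infty(F^2)^I$ is compatible with $J$-multiplication. Both concerns are handled by noting that the decomposition in Corollary \ref{tw Tw expansion corollary} is finite at the outer level and that the action of each summand has already been computed explicitly, so no further convergence argument is required.
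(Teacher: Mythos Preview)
Your argument is correct but follows a different path from the paper's. The paper proceeds by induction on $\ell(w)$: Propositions~\ref{t1 plane action trivial prop} and~\ref{ts0 plane action formula prop} establish that $t_1$, $t_{s_0}$, $t_{s_1}$ act (and that the unit of $J$ acts as the identity), and then Lemma~\ref{phi lemma} supplies the inductive step, since rewriting $\phi(C_w)=-(q^{1/2}+q^{-1/2})t_w+t_r+t_{ws_j}$ as $t_{ws_j}=\phi(C_w)+(q^{1/2}+q^{-1/2})t_w-t_r$ and pulling back through $\phi\circ j$ expresses the longer element as $\pm C'_w\in H$ plus shorter $t$'s already known to act.

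You instead work directly with the closed form in Corollary~\ref{tw Tw expansion corollary}, using the $\tilde A_1$-specific observation that ``starts with $y_k$'' collapses to ``starts with $s_{i_1}$ and has length $\geq k$,'' so that every infinite piece is a tail of one of the four series in Lemma~\ref{starting with series plane action lemma}. This bypasses Propositions~\ref{t1 plane action trivial prop} and~\ref{ts0 plane action formula prop} entirely and gives, in principle, an explicit formula for $t_y\star\varphi_m$ and $t_y\star\psi_m$ in one shot rather than recursively. The paper's inductive route is shorter to state and makes the role of the unit transparent; your route is more self-contained (it needs only Lemma~\ref{starting with series plane action lemma} and the finite formulas of Lemma~\ref{general Tw action on functions on plane formulas}) and would more readily yield closed expressions for the action of a general $t_y$. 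Both arguments leave the compatibility with the ring structure of $J$ to the fact that $\phi\circ j$ is an algebra isomorphism, which you at least make explicit.
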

\begin{proof}
The last sentence of Proposition \ref{ts0 plane action formula prop}
says that the identity in $J$ acts on $C^\infty_c(F^2)^I$ by the identity endomorphism;
recall we have shown $t_1$ acts trivially in Proposition \ref{t1 plane
action trivial prop}. By Corollary \ref{H acts corollary}, the action of $H$ on $C_c^\infty(F^2)^I$ is well-defined. By Proposition \ref{ts0 plane action formula prop}, $t_{s_0}$ and $t_{s_1}$ have well-defined actions. Now using the first formula of lemma \ref{phi lemma}, we see that $t_{s_is_j}$ has a well-defined action. Then using the second formula of that lemma we see that $t_{s_is_js_i}$ has a well-defined action, and so on.
\end{proof}

\bibliography{sl_2_paper_biblio.bib}

\end{document}